\providecommand{\U}[1]{\protect\rule{.1in}{.1in}}
\providecommand{\U}[1]{\protect\rule{.1in}{.1in}}
\newtheorem{theorem}{Theorem}[section]
\newtheorem{lemma}[theorem]{Lemma}
\theoremstyle{definition}
\newtheorem{definition}[theorem]{Definition}
\newtheorem{remark}{Remark}
\DeclareMathOperator{\dv}{div}
\DeclareMathOperator{\meas}{meas}
\DeclareMathOperator{\sign}{sign}
\keywords{Uniqueness result, Nonlinear parabolic equations,
renormalized solution, integrable data}
\subjclass[2000]{ 35K55, 35K20, 35R05 }
\numberwithin{equation}{section}
\title[Uniqueness of renormalized solutions to nonlinear parabolic
\ldots]{Uniqueness of renormalized solutions to nonlinear parabolic
  problems with lower order terms }
\author{Rosaria Di Nardo}
\address{Dipartimento di Matematica  \\
  Seconda Universit\`a degli Studi di Napoli\\
  viale Lincoln, 5 - 81100 CASERTA, Italy}
\email{Rosaria.Dinardo@unina.it}
\author{Filomena Feo}
\address{Dipartimento per le Tecnologie, Universit\`a degli Studi di
  Napoli “Parthenope”, Centro Direzionale Isola C4, 80100 Naples,
  Italy}
\email{Filomena.Feo@uniparthenope.it}
\author{Olivier Guib\'e}
\address{Laboratoire de Math\'ematiques Rapha\"el Salem \\
Universit\'e de Rouen, CNRS \\
F-76801 Saint Etienne du Rouvray cedex, France}
\email{Olivier.Guibe@univ-rouen.fr}
\begin{document}
\maketitle

\begin{abstract}
In this paper we prove uniqueness results for renormalized solutions to a
class of nonlinear parabolic problems.
%whose prototype is%
%\begin{equation}
%\left\{
%\begin{aligned} {} & \frac{\partial u}{\partial t}-\dv((1+|\nabla u|^{2})^{(p-2)/2}\nabla u) +\dv(c(x,t)(1+| u|^{2})^{(\tau+1)/2}) \\
%&+b(x,t)(1+\left\vert \nabla u\right\vert^{2}) ^{(\delta+1)/2} =f- \dv g &\text{in } Q_{T}\\ & u(x,t)=0 &\text{\hglue-1cm on } \partial\Omega\!\times\!(0,T) & %\\ & u(x,0)=u_{0}(x) & \text{in } \Omega, \end{aligned}\right..
%\nonumber
%\end{equation}
%where $Q_{T}$ is the cylinder $\Omega\times(0,T)$, $\Omega$ is a bounded open
%set of $\mathbb{R}^{N}$, $N\geq2$, $T>0$, the coefficients $c(x,t)$ and
%$b(x,t)$ belong to suitable Lebesgue spaces, $f\in L^{1}(Q_{T}),g\in
%(L^{p^{\prime}}(Q_{T}))^{N}$ and $u_{0}\in L^{1}(\Omega$) and $\tau$ and
%$\sigma$ are positive constants that belong to suitable intervals.

\end{abstract}

\section{Introduction}
In the present paper we investigate  the uniqueness of the following class of nonlinear parabolic problems
\begin{equation}
\left\{
\begin{aligned} &\frac{\partial u}{\partial
    t}-\dv\left( a(x,t,u,\nabla u)\right) \\ & \quad \qquad {}+\dv
  \left( K(x,t,u)\right) +H(x,t,\nabla u)=f- \dv g & \text{ in }
  Q_{T}\\ & u(x,t)=0 & \text{\hglue-1cm
    on } \partial\Omega\times\left( 0,T\right) \\ &u(x,0)=u_{0}(x) &
  \text{ in } \Omega, \end{aligned}\right.
\label{problema}%
\end{equation}
where $Q_{T}$ is the cylinder $\Omega\times(0,T)$, $\Omega$ is a bounded open
subset of $\mathbb{R}^{N}$, $T>0$,
$p>1$ and $N\geq2$. Moreover $ -\dv\left( a(x,t,u,\nabla u)\right)$ is
a Leray-Lions operator which is coercive and grows like $|\nabla
u|^{p-1}$ with respect to $\nabla u$. The function $K$ and $H$ are
Carath\'{e}odory functions with suitable assumptions (see Theorems
\ref{th1}, \ref{th2} and \ref{th3}). Finally $f\in
L^{1}(Q_{T})$, $g\in( L^{p^{\prime}}(Q_{T})) ^{N}$ and $u_{0}\in
L^{1}(\Omega)$.

The difficulties connected to existence and uniqueness of the solution
to this problem are due to the $L^{1}$ data and to
the presence of the two terms $K$ and $H$ which can induce a lack of
coercivity.

For $L^{1}$ data  and $p>2-\frac{1}{N+1}$ the existence of a weak
solution to Problem~(\ref{problema}) (which belongs to
$L^{m}((0,T);W_{0}^{1,m}(\Omega))$ with $m<\frac{p(N+1)-N}{N+1}$) was
proved in \cite {boccardogallouet1} (see also
\cite{bocc-all'aglio-galue-orsina}) when $K\equiv H\equiv 0$
and in \cite{porzio} when $K\equiv 0$. It is well known that this weak
solution is not unique in general (see \cite{serrin} for a
counter-example in the stationary case). In the present paper we use the
framework of renormalized solutions which provides uniqueness and
stability properties.

The notion of renormalized solution was introduced in \cite{DPL89a,DPL89b}
for first order equations and has been adapted for elliptic problems with $%
L^{1}$ data in \cite{muratpreprint,murat94}) and with
bounded measure data in \cite{dmop}. This notion was also developed for
parabolic equation with $L^{1}$ data in
\cite{blanchard-murat,blanchard-murat-redvan} (see also \cite{petitta}
for measure data). Recall that the equivalent notion of entropy
solution for $L^{1}$ data was also developed for elliptic
equation in \cite{6nomi} (see also \cite{prignet} in the parabolic case).

In the case where $H\equiv 0$ and where the function $K(x,t,u)$ is
independent  on the $(x,t)$ variable and continuous,
the existence of a renormalized solution to Problem \eqref{problema} is proved in
\cite{blanchard-murat-redvan}. The case $H\equiv 0$, $g\equiv 0$ (and
where $K$  depends on $(x,t)$ and $u$) is investigated in
\cite{[dinardo]}. In \cite{d-f-g} the authors prove the existence of a
renormalized solution for the complete operator.

As far as the uniqueness of  renormalized solution to parabolic
equation  is concerned, we refer mainly to
\cite{blanchard-murat,blanchard-murat-redvan,carrillo-wittbold} where
in short the function $K$ does not depend on the $(x,t)$ variable and where
$H\equiv 0$ (see also \cite{blanchard-porretta05} for Stefan problem
with $L^{1}$ data). In particular, when $H\equiv 0$ and under a local
Lipschitz assumption on
$a(x,t,r,\xi)$ and on $K(r)$ with respect to $r$ the authors prove in
\cite{blanchard-murat-redvan} that the renormalized solution to
Problem \eqref{problema} is unique. With respect to the mentioned
references, the main novelty of the present
paper is to present uniqueness results to parabolic equations
\eqref{problema} with the two terms $-\dv(K(x,t,u))$ and $H(x,t,\nabla u)$.
The first result (see Theorem~\ref{th1}) deals with the case $H\equiv
0$ and establishes the
uniqueness of the renormalized solution to Problem \eqref{problema} under
a local Lipschitz condition on $a(x,t,r,\xi)$ and $K(x,t,r)$ with respect
to $r$. The proof uses the techniques developed in
\cite{blanchard-murat-redvan} and  the dependence of the function $K$
with respect to the $(x,t)$ variable leads to additional
difficulties here. Such difficulties are
overcome by a technical lemma (see Lemma \ref{Lemma1}) which specifies
the asymptotic behavior of some terms which appear in the uniqueness
process. The
second result (see Theorem \ref{th2} for $p\geq 2$ and Theorem
\ref{th3} for $2-\frac{1}{N+1}<p<2$) addresses Equation \eqref{problema}
with the presence of the two terms  $-\dv(K(x,t,u))$ and $H(x,t,\nabla
u)$. Under more restrictive assumptions on $a$ and under global
Lipschitz type condition on $K(x,t,s)$ with respect to $s$ and
$H(x,t,\xi)$ with respect to $\xi$ we show the uniqueness of the
renormalized solution. The proof uses two technical lemmas (Lemma
\ref{Lemma1} and Lemma \ref{lemma tecnico}) and the techniques
developed in \cite{d-f-g} for the existence of a solution to Problem
\eqref{problema} (see also \cite{porzio}).
We underline that we don't make any assumptions on the
smallness of the coefficients. Indeed for the analogous elliptic equation
with two lower order terms (see e.g. \cite{GM3} and
\cite{dinardo-perrotta}) it is necessary to
assume that one of the terms $K$ or $H$ is small enough in order to
obtain an existence and uniqueness result.

The paper is organized as follows. In Section~2  we present the
assumptions on the data  and we recall the definition of a
renormalized solution to
Problem \eqref{problema}. In Section~3 we state the main results of the
present paper.
Section~4 is devoted to the proof of the uniqueness
results.

\section{Assumptions and Definitions}

In this section we recall the definition of a renormalized solution to
nonlinear parabolic problems with lower order terms and $L^{1}(\Omega
\times(0,T))+L^{p^{\prime}}((0,T);W^{-1,p^{\prime}}(\Omega))$ data.

More precisely we consider the following problem
\begin{equation}
\left\{
\begin{aligned} &\frac{\partial u}{\partial t}-\dv\left( a(x,t,u,\nabla u)\right) \\ & \quad \qquad {}+\dv \left( K(x,t,u)\right) +H(x,t,\nabla u)=f- \dv g & \text{ in } Q_{T}\\ & u(x,t)=0 & \text{\hglue-1cm on } \partial\Omega\times\left( 0,T\right) \\ &u(x,0)=u_{0}(x) & \text{ in } \Omega, \end{aligned}\right.
\label{problema compl}%
\end{equation}
where $Q_{T}$ is the cylinder $\Omega\times(0,T)$, $\Omega$ is a bounded open
subset of $\mathbb{R}^{N}$ with boundary $\partial\Omega$, $T>0$,
$p>1$ and $N\geq2$.

The following assumptions hold true:

\begin{itemize}
\item $a:Q_{T}\times\mathbb{R}\times\mathbb{R}^{N}\rightarrow\mathbb{R}^{N}$
is Carath\'{e}odory function such that
\begin{equation}
a(x,t,s,\xi)\xi\geq\alpha_{0}\left\vert \xi\right\vert ^{p},\text{
\ \ \ \ }\alpha_{0}>0, \label{ellit}%
\end{equation}
and
\begin{equation}
\left(  a(x,t,s,\xi)-a(x,t,s,\overline{\xi}),\xi-\overline{\xi}\right)
>0, \label{monotonia}%
\end{equation}
for a.e. $(x,t)\in Q_{T}$, for any $s\in \mathbb{R}$ and any
$\xi,\overline{\xi} \in
\mathbb{R}^{N}$ with $\xi\neq \overline{\xi}$.\par
Moreover for any $k>0$ there exists $\beta_{k}>0$ and $h_{k}\in L^{p^{\prime}}(Q_{T})$
such that%
\begin{equation}
\left\vert a(x,t,s,\xi)\right\vert \leq h_{k}+\beta_{k}\left\vert
\xi\right\vert ^{p-1},\text{ \ \ \ for every }s\text{ such that }\left\vert
s\right\vert \leq k, \label{crescita a}%
\end{equation}%
for a.e. $(x,t)\in Q_{T}$ and any $\xi\in
\mathbb{R}^{N}$;
\item $K:Q_{T}\times\mathbb{R}\rightarrow\mathbb{R}^{N}$ is a Carath\'{e}odory
function such that
\begin{equation}
\left\vert K(x,t,s)\right\vert \leq c(x,t)(\left\vert s\right\vert
^{\gamma}+1), \label{crescita K}%
\end{equation}
with%
\begin{equation}
\gamma=\frac{N+2}{N+p}(p-1)\text{ and }c\in L^{r}(Q_{T})\text{ with }%
r\geq\frac{N+p}{p-1}, \label{sommabilit_c}%
\end{equation}
for a.e. $(x,t)\in Q_{T}$, for every $s\in\mathbb{R}$;

\item $H:Q_{T}\times\mathbb{R}^{N}\rightarrow\mathbb{R}$ is a Carath\'{e}odory
function such that
\begin{equation}
\left\vert H(x,t,\xi)\right\vert \leq b(x,t)(\left\vert \xi\right\vert
^{\delta}+1), \label{crescita H}%
\end{equation}
with%
\begin{equation}
\delta=\frac{N(p-1)+p}{N+2}\text{ and }b\in L^{N+2,1}(Q_{T}),
\label{sommabilit_b}%
\end{equation}
for a.e. $(x,t)\in Q_{T}$ and for every $\xi
\in\mathbb{R}^{N}$.
\par
Moreover we assume that
\begin{equation}
f\in L^{1}(Q_{T}), \label{f}%
\end{equation}%
\begin{equation}
g\in\big(L^{p^{\prime}}(Q_{T})\big)^{N} \label{dato g}%
\end{equation}
and%
\begin{equation}
u_{0}\in L^{1}(\Omega). \label{uzero}%
\end{equation}

\end{itemize}

Under these assumptions, the above problem does not admit, in general, a
solution in the sense of distribution since we cannot expect to have the
fields $a(x,t,u,\nabla u)$, $K(x,t,u)$ in $(L_{loc}^{1}(Q_{T}))^{N}$ and
$H(x,t,\nabla u)$ in $L_{loc}^{1}(Q_{T})$. For this reason in the present
paper we consider the framework of renormalized solutions.

For any $k>0$ we denote by $T_{k}$ the truncation function at height $\pm k$,
$T_{k}(s)=\max(-k,\min(k,s))$ for any $s\in\mathbb{R}$.

\medskip We recall the definition of a renormalized solution (see
\cite{blanchard-murat, blanchard-murat-redvan}) to Problem
(\ref{problema compl}).

\begin{definition}
A real function $u$ defined in $Q_{T}$ is a renormalized solution of $\left(
\ref{problema compl}\right)  $ if it satisfies the following conditions:%
\begin{equation}
u\in L^{\infty}((0,T);L^{1}(\Omega)), \label{87}%
\end{equation}%
\begin{equation}
T_{k}(u)\in L^{p}((0,T);W_{0}^{1,p}(\Omega)),\text{ for any }k>0, \label{9}%
\end{equation}%
\begin{equation}
\lim_{n\rightarrow+\infty}\frac{1}{n}\int_{\{(x,t)\in Q_{T}\,:\,|u(x,t)|\leq
n\}}a(x,t,u,\nabla u)\nabla udxdt=0, \label{12}%
\end{equation}
and if for every function $S\in W^{2,\infty}(\mathbb{R})$ which is piecewise
$C^{1}$ and such that $S^{\prime}$ has a compact support
\begin{multline}
\frac{\partial S(u)}{\partial t}-\dv(a(x,t,u,\nabla u)S^{\prime}%
(u))+S^{\prime\prime}(u)a(x,t,u,\nabla u)\nabla u\label{58}\\
{}+\dv(K(x,t,u)S^{\prime}(u))-S^{\prime\prime
}(u)K(x,t,u)\nabla u\\
{} + H(x,t,\nabla u)S^{\prime}(u) =fS^{\prime}(u)-(\dv g)S^{\prime}(u)\quad\text{in }\mathcal{D}^{\prime
}(Q_{T})
\end{multline}
and
\begin{equation}
S(u)(t=0)=S(u_{0})\quad\text{in }\Omega. \label{80}%
\end{equation}

\end{definition}

\begin{remark}
It is well known that conditions (\ref{87}) and (\ref{9}) allow to define
$\nabla u$ almost everywhere in $Q_{T}$: for any $k>0$ we have $\nabla
T_{k}(u)=\chi_{\{|u|<k\}}\nabla u$ a.e in $Q_{T}$ where $\chi_{\{|u|<k\}}$
denotes the characteristic function of the set $\{(x,t) :|u(x,t)|<k\}$. We
notice that equation (\ref{58}) can be formally obtained through pointwise
multiplication of (\ref{problema compl}) by $S^{\prime}(u)$ and all terms
except $S(u)_{t}$ in (\ref{58}) belong to $L^{1}(Q_{T})+L^{p^{\prime}%
}((0,T);W^{-1,p^{\prime}}(\Omega))$ since $T_{k}(u)\in L^{p}((0,T);W_{0}%
^{1,p}(\Omega))$, for any $k>0$ and $S^{\prime}$ has a compact support. It
follows that (\ref{58}) has a meaning in $\mathcal{D}^{\prime}(Q_{T})$ and
that the initial condition (\ref{80}) makes sense. At last condition
(\ref{12}) gives additional information on $\nabla u$ for large value of $|u|$.
\end{remark}

\medskip We use in the present paper the two Lorentz spaces $L^{q,1}(Q_{T})$
and $L^{q,\infty}(Q_{T})$, see for example \cite{lorentz,oneil} for
references about Lorentz spaces $L^{q,s}$. If $f^{\ast}$ denotes the
decreasing rearrangement of a measurable function $f$,
\[
f^{\ast}(r)=\inf\{s\geq0~:~\hbox {meas}\left\{  (x,t)\in Q_{T}%
~:~|f(x,t)|>s\right\}  <r\},
\]
with $r\in\lbrack0,\meas(Q_{T})]$,
$L^{q,1}(Q_{T})$ is the space of Lebesgue measurable functions such that
\[
\Vert f\Vert_{L^{q,1}(Q_{T})}=\left(  \int_{0}^{\meas(Q_{T})}f^{\ast
}(r)r^{\frac{1}{q}}\frac{dr}{r}\right)  <+\infty
\]
while $L^{q,\infty}(Q_{T})$ is the space of Lebesgue measurable functions such
that
\[
\Vert f\Vert_{L^{q,\infty}(Q_{T})}=\sup_{r>0}r\left[  \hbox {meas}\left\{
(x,t)\in Q_{T}~:~|f(x,t)|>r\right\}  \right]  ^{1/q}<+\infty.
\]
If $1<q<+\infty$ we have the generalized H\"{o}lder in$_{{}}$equality
\begin{equation}
\left\{
\begin{aligned} {} & \forall f\in L^{q,\infty}(Q_{T}),~\forall g\in L^{q',1}(Q_{T}), \\ {} & \displaystyle \int_{Q_{T}} |fg |\le \| f\|_{L^{q,\infty}(Q_{T})}\| g\|_{L^{q',1}(Q_{T})} .\label{lorh} \end{aligned}\right.
\end{equation}

Under the assumptions \eqref{ellit}-\eqref{uzero} the existence of a
renormalized solution to Problem \eqref{problema compl} is established
in \cite{d-f-g} and it is well known that \eqref{87}-\eqref{12} lead to
\begin{equation}
\left\vert \nabla u\right\vert \in L^{\frac{N(p-1)+p}{N+1},\infty}%
(Q_{T})\label{stimagradiente}%
\end{equation}%
and
\begin{equation}
u\in L^{\frac{N(p-1)+p}{N},\infty}(Q_{T}).\label{stimau}%
\end{equation}
Moreover  the growth assumptions \eqref{crescita K},
\eqref{crescita H} on $K$ and $H$, the regularities
\eqref{sommabilit_c}, \eqref{sommabilit_b} of $c$ and $b$
together with \eqref{87} and \eqref{12} allow to prove (see
\cite{d-f-g}) that any
renormalized solution to Problem \eqref{problema compl} verifies
\begin{equation}
  \label{eqoga}
  H(x,t,\nabla u)\in L^{1}(Q_{T})
\end{equation}
and
\begin{equation}
  \label{eqogb}
  \lim_{n\rightarrow +\infty} \frac{1}{n}\int_{\{(x,t)\in Q_{T}\,;\,|u(x,t)|<n\}}^{ }
  |K(x,t,u)| \, |\nabla u | dxdt=0.
\end{equation}
Properties \eqref{eqoga} and \eqref{eqogb} are crucial to obtain
uniqueness results.

\par\medskip

{\slshape Notation.} Throughout the paper, for the sake of shortness
if  $u$ is a measurable function defined on $Q_{T}$, we denote by
$\{ |u|\leq k\}$ (resp. $\{ |u| <k \}$) the measurable subset
$\{ (x,t)\in Q_{T}\,;\, |u(x,t)|\leq k\}$ (resp. $\{ (x,t)\in
Q_{T}\,;\, |u(x,t)|<k \}$. Moreover the explicit dependence in $x$ and
$t$ of the
functions $a$, $K$ and $H$  will be omitted
so that $a(x,t,u,\nabla
u)=a(u,\nabla u)$, $K(u)=K(x,t,u)$ and $H(\nabla u)=H(x,t,\nabla u)$.

\section{Statement of the results}

\subsection{First case: $H\equiv 0$}

In order to prove uniqueness result in the case $H(x,t,\xi)=0$ we
assume the further condition that
$a(x,t,s,\xi)$ and $K(x,t,s)$ are locally continuous Lipschitz with respect to $s$ :
for any compact set $C$ of $\mathbb{R}$, there exists
$L_{C}$ belonging to $L^{p'}(Q_{T)}) $ and $\gamma_{C}>0$
such that $\forall s,\overline{s}\in C$
\begin{gather}
  \label{loclipa}
  |a(x,t,s,\xi)-a(x,t,\overline{s},\xi)|\le
  \big( L_{C}(x,t) + \gamma_{C} |\xi|^{p-1}\big)
  |s-\overline{s}|
  \\
\left\vert K(x,t,s)-K(x,t,\overline{s})\right\vert    \leq
L_{C}(x,t)\left\vert s - \overline{s}\right\vert \label{loc lip K}
\end{gather}
for almost every $(x,t)\in Q_{T}$  and for every $\xi\in\mathbb{R}^{N}$.
\par
\smallskip
The main result of this subsection is the following theorem.

\begin{theorem}
\label{th1} Under the assumptions \eqref{ellit}-\eqref{sommabilit_c},
\eqref{f}-\eqref{uzero}, \eqref{loclipa} and \eqref{loc lip K}, the
renormalized solution to Problem \eqref{problema compl} is unique.
\end{theorem}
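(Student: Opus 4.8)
The plan is to take two renormalized solutions $u$ and $v$ of Problem \eqref{problema compl} with the same data $(f,g,u_0)$ and show $u=v$ by a suitable comparison argument. The basic strategy follows \cite{blanchard-murat-redvan}: we want to test the difference of the two renormalized equations \eqref{58} against a truncation of $u-v$. Since neither $u$ nor $v$ is admissible as a test function directly, we must first renormalize. Concretely, for fixed parameters $n$ and later $k$, I would use the renormalization function $S=S_n$ with $S_n'(r)=\chi_{[-n,n]}$ suitably regularized (or a standard $C^1$ piecewise approximation such that $S_n'$ has compact support in $[-2n,2n]$), so that $S_n(u)$ and $S_n(v)$ lie in spaces where $\partial_t S_n(u)$ and $\partial_t S_n(v)$ belong to $L^1(Q_T)+L^{p'}((0,T);W^{-1,p'}(\Omega))$ and $S_n(u), S_n(v)\in L^p((0,T);W_0^{1,p}(\Omega))$. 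Subtracting the two equations for $S_n$ and testing with $T_k(S_n(u)-S_n(v))$, then letting $n\to\infty$ first and $k\to 0$ afterwards (after dividing by $k$), should produce the desired $L^\infty(L^1)$ contraction $u=v$.

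The key steps, in order, are: (i) write the weak formulation of the difference of the $S_n$-renormalized equations and pair it with $\phi_{n,k}:=T_k(S_n(u)-S_n(v))$; (ii) handle the parabolic term, using the time-regularity of $S_n(u)$ and $S_n(v)$ and the initial condition \eqref{80} to get $\int_\Omega \Theta_k(S_n(u)-S_n(v))(T)\,dx$ up to vanishing contributions, where $\Theta_k$ is the primitive of $T_k$; (iii) treat the diffusion term $\int_{Q_T}\big(a(u,\nabla u)S_n'(u)-a(v,\nabla v)S_n'(v)\big)\cdot\nabla\phi_{n,k}$, splitting $a(u,\nabla u)S_n'(u)-a(v,\nabla v)S_n'(v)$ as $\big(a(u,\nabla u)-a(v,\nabla u)\big)S_n'(u) + \big(a(v,\nabla u)-a(v,\nabla v)\big)S_n'(u) + a(v,\nabla v)(S_n'(u)-S_n'(v))$, so that on $\{|u-v|<k\}$ the genuinely monotone piece $\big(a(v,\nabla u)-a(v,\nabla v),\nabla u-\nabla v\big)\ge 0$ is kept and the first piece is controlled by \eqref{loclipa}, i.e. bounded by $k\int (L_C+\gamma_C|\nabla u|^{p-1})|\nabla\phi_{n,k}|$; (iv) treat the $K$-term $\int_{Q_T}\dv\big(K(u)S_n'(u)-K(v)S_n'(v)\big)$ similarly, integrating by parts and using the local Lipschitz bound \eqref{loc lip K} to estimate $|K(u)-K(v)|\le L_C|u-v|\le k\,L_C$ on $\{|u-v|<k\}$; (v) treat the $S_n''$ correction terms $\int S_n''(u)a(u,\nabla u)\nabla u\,\phi_{n,k}$ and $\int S_n''(u)K(u)\nabla u\,\phi_{n,k}$, which by \eqref{12} and \eqref{eqogb} vanish as $n\to\infty$ since $|\phi_{n,k}|\le k$ and $S_n''$ is supported in $\{n<|u|<2n\}$; (vi) treat the data terms $\int(f-\dv g)(S_n'(u)-S_n'(v))\phi_{n,k}$, which again vanish as $n\to\infty$; (vii) collect everything, divide by $k$, pass to the limit $n\to\infty$ and then $k\to 0$, using that $\frac1k T_k(S_n(u)-S_n(v))\to\sign(u-v)$ and that all the Lipschitz-type remainders carry a factor $k$ that kills them, leaving $\int_\Omega|u(T)-v(T)|\,dx\le 0$ for a.e. $T$, hence $u\equiv v$.

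The main obstacle I anticipate is controlling the ``cross'' terms coming from the $(x,t)$-dependence of $a$ and especially of $K$: in \cite{blanchard-murat-redvan} $K=K(r)$ is autonomous and continuous, so the analogous difference $K(u)-K(v)$ is handled by continuity alone, whereas here we only have \eqref{loc lip K} with a coefficient $L_C\in L^{p'}(Q_T)$ that is merely $p'$-integrable, not bounded. After integration by parts the $K$-term involves $\int_{\{|u-v|<k\}} \big(K(u)-K(v)\big)\cdot\nabla(u-v)$ plus contributions supported on $\{n<|u|<2n\}$ or $\{n<|v|<2n\}$ through the $S_n'$ factors, and one must show both that the former is $O(k)$ times a bounded quantity — which needs the global-in-$n$ control $|K(u)-K(v)|\,|\nabla T_k(u-v)|\le k L_C(|\nabla T_m u|+|\nabla T_m v|)$ on a region where $|u|,|v|$ are controlled, carefully tracking the truncation levels — and that the latter vanishes as $n\to\infty$. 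This is precisely where the technical Lemma~\ref{Lemma1} on the asymptotic behavior of such terms is needed; the bookkeeping of which set each term lives on (e.g. $\{|u|\le 2n\}\cap\{|v|\le 2n\}$ versus the ``boundary layers'') and the order of limits ($n\to\infty$ before $k\to0$) is the delicate part. The energy/gradient terms with $a$ are handled more routinely because \eqref{loclipa} already has the right structure $(L_C+\gamma_C|\xi|^{p-1})$ and one has the a priori estimate \eqref{stimagradiente} together with $\nabla T_k(u)\in L^p$, so Young's inequality absorbs the bad term into the coercive one \eqref{ellit} at the cost of a constant times $k$.
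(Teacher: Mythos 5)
Your overall strategy (double renormalization, testing with $T_k$ of the difference, dividing by $k$, monotonicity plus the local Lipschitz hypotheses, and a Lemma~\ref{Lemma1}-type control of the layer terms) is the same as the paper's, but two of your concrete mechanisms fail, and they are not mere bookkeeping. First, the claim in step (vii) that ``all the Lipschitz-type remainders carry a factor $k$ that kills them'' is incorrect: the factor $|u-v|\le k$ gained from \eqref{loclipa} and \eqref{loc lip K} is exactly cancelled by the division by $k$, so what survives is of the form $\int \chi_{\{0<|T_s(u)-T_s(v)|<k\}}\big(L_C+\gamma_C|\nabla u|^{p-1}\big)\big(|\nabla u|+|\nabla v|\big)\,dx\,dt$ on the truncated region, which tends to $0$ only by dominated convergence as $k\to 0$ \emph{at fixed truncation level}, since $L_C$, $\gamma_C$ and the dominating $L^1$ function depend on the compact set $C=[-2n,2n]$. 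Consequently your order of limits ($n\to\infty$ first, then $k\to 0$) cannot work: after $n\to\infty$ there is no uniform Lipschitz control left. The paper's order is the opposite and is essential: fix the level $s$, collapse the smoothing parameter $\sigma\to 0$, then let $k\to 0$, and only at the very end let $s\to\infty$.

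Second, the boundary-layer terms do not ``vanish as $n\to\infty$'' for fixed $k$. The contributions from the regions where $S_n'(u)\neq S_n'(v)$ reduce to quantities such as $\frac1k\int_{\{n-k<|u|\le n\}}|K(u)|\,|\nabla u|\,dx\,dt$ and $\frac1k\int_{\{n-k<|u|\le n\}}\big(a(u,\nabla u)\nabla u+|g|^{p'}\big)dx\,dt$; conditions \eqref{12} and \eqref{eqogb} only control averages over layers of width comparable to $n$, not width-$k$ layers at height $n$, so these need not be small for every level. This is exactly what Lemma~\ref{Lemma1} supplies, but only as $\liminf_{s\to\infty}$ of $\limsup_{k\to 0}\frac1k\Gamma(u,v,s,k)$, i.e.\ along well-chosen levels and \emph{after} $k\to 0$, which again conflicts with your order of limits. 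Moreover, with your wide ramp ($S_n''\sim 1/n$ on $\{n<|r|<2n\}$) the $S_n''$-terms are indeed easy (step (v) is fine), but the cross terms where $S_n'(u)=1$ and $0<S_n'(v)<1$ produce integrals of $a(v,\nabla v)\nabla v$, $|K(v)|\,|\nabla v|$ and $h_{2n}^{p'}$ over the wide layer $\{n<|v|<2n\}$ \emph{without} the factor $1/n$; since a monotone $F$ with $F(s)=o(s)$ need not satisfy $F(2n)-F(n)\to 0$, \eqref{12} and \eqref{eqogb} do not make these small, not even along subsequences. So the argument must be reorganized as in the paper: keep the level fixed, shrink the smoothing layer first (so the cross terms become width-$k$ layers at the fixed height), pass $k\to 0$ using dominated convergence for the Lipschitz pieces, and finally take a $\liminf$ over levels via Lemma~\ref{Lemma1} together with Fatou's lemma on the left-hand side.
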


\subsection{Second case: general operator}

In order to prove uniqueness result for Problem (\ref{problema compl})
with the term $H(x,t,\nabla u)$ we assume in this subsection that the
function $a$ is independent of $r$ and is strongly monotone (see
assumptions \eqref{eqog11} in Theorem \ref{th2} and \eqref{eqog11a} in
Theorem \ref{th3}).

Moreover the functions $K(x,t,s)$ (resp. $H(x,t,\xi)$) is locally
Lipschitz continuous with respect to $s$ (resp. $\xi$) with a global
control of the Lipschitz coefficient:
\begin{equation}
\left\vert K\left(  x,t,s\right)  -K\left(  x,t,\overline{s}\right)  \right\vert \leq
c(x,t) \left(  1+\left\vert s\right\vert +\left\vert \overline{s}\right\vert
\right)  ^{\tau} |  s-\overline{s} | \text{ \ \ \ }\tau\geq0
\label{lip K}%
\end{equation}
and%
\begin{equation}
\left\vert H\left(  x,t,\xi\right)  -H\left(  x,t,\overline{\xi}\right)
\right\vert \leq b(x,t)  \left(  1+\left\vert \xi\right\vert +\left\vert
\overline{\xi}\right\vert \right)  ^{\sigma} |  \xi-\overline{\xi}|
  \ \ \ \sigma\geq0 \label{Lip H}%
\end{equation}
for a.e. $(x,t)\in Q_{T}$, for every $s,\overline{s}\in\mathbb{R}$, for every
$\xi,\overline{\xi}\in\mathbb{R}^{N}$ with $c\in L^{r,1}(Q_{T})$ and $b\in
L^{\lambda,1}(Q_{T})$ where $r$ and $\lambda$ belong to suitable intervals
(see Theorems 3.3 and 3.4)

We investigate the case $p\geq2$ and the case $2-\frac{1}{N+1}<p<2$ in two
different results.

\begin{theorem}
\label{th2} Let $p\geq2$. Let us assume that
\eqref{ellit}-\eqref{uzero} hold and that the function $a$ is
independent of $r$ and satisfies
\begin{equation}
\label{eqog11}
\left(a( x, t,\xi) -a(x,t,\overline{\xi})
 \right)  {    \cdot  }( \xi - \overline{\xi})    \geq
   \beta( 1 + \left\vert \xi\right\vert {   + }\left\vert
\overline{\xi}\right\vert {   )}^{p-2}{   }\left\vert \xi
-\overline{\xi}\right\vert ^{2}
\end{equation}
for a.e. $(x,t)\in Q_{T}$,  for every
$\xi,\overline{\xi}\in\mathbb{R}^{N}$ with $\xi\neq\overline{\xi}$ and
$\beta>0$.
\par
Moreover we assume that \eqref{lip K} and \eqref{Lip H} are
satisfied with
\begin{equation}
\begin{cases}
r\geq N+2, &
0\leq\tau\leq\dfrac{N(p-1)+p}{N}\Big(\dfrac{1}{N+2}-\dfrac{1}{r}\Big),
\\
\lambda\geq N+2, &
 0\leq\sigma\leq\dfrac{N(p-1)+p}{N+1}\Big(\dfrac{1}{N+2}-\dfrac{1}{\lambda}\Big).
 \end{cases}
 \label{ip 1}
\end{equation}
Then the renormalized solution to Problem $(\ref{problema compl})$ is unique.
\end{theorem}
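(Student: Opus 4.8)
The plan is to take two renormalized solutions $u_{1},u_{2}$ of \eqref{problema compl} sharing the same data $(f,g,u_{0})$ and to prove $u_{1}=u_{2}$ by a comparison argument performed on truncations of $u_{1}-u_{2}$. \textbf{Step 1 (admissible test functions).} Since $u_{i}$ only enjoys \eqref{87}--\eqref{12}, the difference $u_{1}-u_{2}$ is not directly admissible; following the technique of \cite{blanchard-murat-redvan,d-f-g}, for $n\in\mathbb{N}$ one inserts in \eqref{58}, written for $u_{1}$ and for $u_{2}$, the renormalization $S=S_{n}$ with $S_{n}'$ compactly supported and $S_{n}'\equiv 1$ on $[-n,n]$, subtracts the two identities, and tests the resulting equation against $T_{k}(u_{1}-u_{2})$ on $(0,t)\times\Omega$ (the truncation being first localized by cutoffs $h_{m}(u_{1})h_{m}(u_{2})$, $m\to\infty$, so that all products lie in the relevant duality spaces), and then lets $n\to\infty$.

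\textbf{Step 2 (parabolic and diffusion terms).} Using the integration by parts formula for the time derivative and the equal initial data $S_{n}(u_{i})(0)=S_{n}(u_{0})$, the contribution of $\partial_{t}(S_{n}(u_{1})-S_{n}(u_{2}))$ converges, as $n\to\infty$, to $\int_{\Omega}\Theta_{k}\big((u_{1}-u_{2})(x,t)\big)\,dx\ge 0$, where $\Theta_{k}(s)=\int_{0}^{s}T_{k}(\sigma)\,d\sigma$; the $S_{n}''$-terms and the right-hand side terms $\big((f-\dv g)(S_{n}'(u_{1})-S_{n}'(u_{2}))\big)$ vanish in the limit thanks to \eqref{12}, \eqref{stimagradiente} and $S_{n}'(u_{i})\to 1$. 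Since $a$ is independent of $r$, the diffusion contribution is $\inttOmega\big(a(\nabla u_{1})-a(\nabla u_{2})\big)\cdot\nabla T_{k}(u_{1}-u_{2})\,dxdt$, which by the strong monotonicity \eqref{eqog11} and $p\ge 2$ is bounded below by $\beta\inttOmega|\nabla T_{k}(u_{1}-u_{2})|^{2}\,dxdt$.

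\textbf{Step 3 (lower order terms and conclusion).} It remains to dominate the $K$- and $H$-contributions. For $K$, by \eqref{lip K} and $|u_{1}-u_{2}|<k$ on the support of $\nabla T_{k}(u_{1}-u_{2})$, the term is bounded by $k\inttOmega c\,(1+|u_{1}|+|u_{2}|)^{\tau}|\nabla T_{k}(u_{1}-u_{2})|\,dxdt$, and Young's inequality splits it into $\varepsilon\beta\inttOmega|\nabla T_{k}(u_{1}-u_{2})|^{2}\,dxdt$, absorbed by the diffusion term, plus $C_{\varepsilon}k^{2}\intTOmega c^{2}(1+|u_{1}|+|u_{2}|)^{2\tau}\,dxdt$, the last integral being finite by the generalized Hölder inequality \eqref{lorh}, the a priori bound \eqref{stimau}, the regularity $c\in L^{r,1}$ and the arithmetic of \eqref{ip 1}. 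The $H$-term is handled by the same Young mechanism on $\{|u_{1}-u_{2}|<k\}$ (using now $b\in L^{\lambda,1}$, \eqref{stimagradiente} and \eqref{ip 1}), while on $\{|u_{1}-u_{2}|\ge k\}$, where $\nabla T_{k}(u_{1}-u_{2})=0$ and the diffusion term gives no control, one instead uses $H(\nabla u_{i})\in L^{1}(Q_{T})$ together with the sharp asymptotics recorded in Lemma \ref{Lemma1} and Lemma \ref{lemma tecnico}. Collecting everything and letting $n\to\infty$, one reaches an inequality $\int_{\Omega}\Theta_{k}\big((u_{1}-u_{2})(x,t)\big)\,dx\le\omega(k)$ with $\omega(k)/k\to 0$ as $k\to 0$, uniformly in $t\in(0,T)$; dividing by $k$ and letting $k\to 0$, since $\tfrac1k\Theta_{k}(s)\to|s|$ with $0\le\tfrac1k\Theta_{k}(s)\le|s|$ and $u_{1}-u_{2}\in L^{\infty}((0,T);L^{1}(\Omega))$ by \eqref{87}, dominated convergence gives $\int_{\Omega}|(u_{1}-u_{2})(x,t)|\,dx=0$ for a.e. $t$, hence $u_{1}=u_{2}$.

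\textbf{Main obstacle.} The delicate point is the lower order terms, above all $H$: the diffusion term controls only $\nabla T_{k}(u_{1}-u_{2})$, so the portion of $H(\nabla u_{1})-H(\nabla u_{2})$ living where $|u_{1}-u_{2}|\ge k$ cannot be absorbed by coercivity, whereas the only a priori bound $|\nabla u_{i}|\in L^{(N(p-1)+p)/(N+1),\infty}(Q_{T})$ is too rough to be squared; reconciling these — making all the lower order contributions genuinely $o(k)$ — is exactly what forces the sharp exponents of \eqref{ip 1} and the two technical lemmas, and is the heart of the argument. The analogous difficulty, compounded by the degenerate weight $(1+|\nabla u_{1}|+|\nabla u_{2}|)^{p-2}$ that must be carried along when $2-\tfrac1{N+1}<p<2$, is what separates Theorem \ref{th2} from Theorem \ref{th3}.
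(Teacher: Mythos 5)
Your overall setup (two solutions, renormalization with a plateau truncation, testing with $T_{k}(u_{1}-u_{2})$, strong monotonicity giving $\beta\inttOmega|\nabla T_{k}(u_{1}-u_{2})|^{2}$) matches the paper, but the way you plan to close the argument has a genuine gap at the $H$-term. Your strategy is to show that every lower order contribution is $o(k)$ uniformly in $t$, divide by $k$ and let $k\to0$. This cannot work for $H$ on the set $\{|u_{1}-u_{2}|\geq k\}$: there $|T_{k}(u_{1}-u_{2})|=k$ and the best available bound is
\begin{equation*}
\Big|\int_{\{|u_{1}-u_{2}|\geq k\}}\big[H(\nabla u_{1})-H(\nabla u_{2})\big]\,T_{k}(u_{1}-u_{2})\,dxdt\Big|
\leq k\int_{\{|u_{1}-u_{2}|\geq k\}}\big|H(\nabla u_{1})-H(\nabla u_{2})\big|\,dxdt,
\end{equation*}
which is only $O(k)$: after dividing by $k$, the right-hand side tends (as $k\to0$) to $\int_{\{u_{1}\neq u_{2}\}}|H(\nabla u_{1})-H(\nabla u_{2})|$, and this vanishes only if $u_{1}=u_{2}$ — exactly what you are trying to prove, so the argument is circular. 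Invoking Lemma \ref{Lemma1} or Lemma \ref{lemma tecnico} does not repair this: Lemma \ref{Lemma1} controls integrals over the level sets $\{s-k<|u_{i}|<s+k\}$ as $s\to\infty$ (large values of $|u_{i}|$, i.e. the $S''$-type boundary terms), not integrals over $\{|u_{1}-u_{2}|\geq k\}$, and Lemma \ref{lemma tecnico} is an a priori interpolation estimate, not a smallness statement.

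The paper closes the loop differently, and this is the essential mechanism you are missing. One does \emph{not} try to make the $H$- and $K$-terms $o(k)$; using \eqref{Lip H}, $|T_{k}|\leq k$ on all of $Q_{t}$ and the Lorentz--H\"older inequality \eqref{lorh}, the $H$-term (and similarly the $K$-term) is bounded by $k\,\Vert b\Vert_{L^{\lambda,1}(Q_{t_{1}})}\Vert 1+|\nabla u_{1}|+|\nabla u_{2}|\Vert^{\sigma}_{L^{q,\infty}}\,\Vert\,|\nabla u_{1}-\nabla u_{2}|\,\Vert_{L^{\theta,\infty}(Q_{t_{1}})}$ with $\theta=\frac{N+2}{N+1}$, i.e. by $kM$ where $M$ is itself proportional to $\Vert\,|\nabla(u_{1}-u_{2})|\,\Vert_{L^{\theta,\infty}(Q_{t_{1}})}$. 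This yields $\sup_{t<t_{1}}\int_{\Omega}|T_{k}(u_{1}-u_{2})|^{2}dx+\beta\int_{0}^{t_{1}}\!\int_{\Omega}|\nabla T_{k}(u_{1}-u_{2})|^{2}dxdt\leq CkM$ for every $k$, to which Lemma \ref{lemma tecnico} (with $\alpha=2$) applies and gives $\Vert\,|\nabla(u_{1}-u_{2})|\,\Vert_{L^{\theta,\infty}(Q_{t_{1}})}\leq CM$; since $M$ contains this very norm multiplied by $\Vert b\Vert_{L^{\lambda,1}(Q_{t_{1}})}$ and $\Vert c\Vert_{L^{r,1}(Q_{t_{1}})}$, choosing $t_{1}$ small (absolute continuity of these Lorentz norms, condition \eqref{ip 1}) allows absorption, forcing $\nabla(u_{1}-u_{2})=0$ and hence $u_{1}=u_{2}$ on $Q_{t_{1}}$, and one then iterates over a finite partition of $[0,T]$. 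Without this absorption/smallness-in-time step (or some substitute for it), your proof does not go through; your treatment of the $K$-term by Young's inequality with an $O(k^{2})$ remainder is admissible, but the $H$-term forces the paper's fixed-point structure.
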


\begin{theorem}
\label{th3} Let $2-\frac{1}{N+1}<p<2.$ Let us assume that
\eqref{ellit}-\eqref{crescita a}, \eqref{f}-\eqref{uzero} hold
and that the function $a$ is independent of $r$ and satisfies
\begin{equation}
\left(a( x, t,\xi) -a(x,t,\overline{\xi})
 \right)  {    \cdot  }( \xi - \overline{\xi})    \geq
   \beta\frac{\left\vert \xi-\overline{\xi}\right\vert ^{2}%
}{\left(  \left\vert \xi\right\vert +\left\vert \overline{\xi}\right\vert
\right)  ^{2-p}}
\label{eqog11a}%
\end{equation}
for a.e. $(x,t)\in Q_{T}$, for every
$\xi,\overline{\xi}\in\mathbb{R}^{N}$ with $\xi\neq\overline{\xi}$ and
$\beta>0.$
Moreover we assume that \eqref{lip K} and \eqref{Lip H} are satisfied with%
\begin{equation}
\begin{cases}
 \displaystyle
r>\dfrac{p(N+1)-N}{(p-1)(N+1)-N},\\[.3cm]
0\leq\tau<\dfrac{N(p-1)+p}{N}\left(  \dfrac{(p-1)(N+1)-N}{p(N+1)-N}-\dfrac{1}%
{r}\right),  \\
\lambda>\frac{p(N+1)-N}{(p-1)(N+1)-N},\\[.3cm]
\displaystyle 0\leq\sigma<\frac{N(p-1)+p}{N+1}\left(  \frac{(p-1)(N+1)-N}{p(N+1)-N}-\frac
{1}{\lambda}\right). \\[.3cm]
\end{cases}
  \label{ip 2}%
\end{equation}
Then  the renormalized solution to Problem $(\ref{problema compl})$ is
unique.
\end{theorem}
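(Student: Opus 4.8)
The plan is a direct comparison argument in the spirit of \cite{blanchard-murat-redvan,d-f-g}. Let $u_1$ and $u_2$ be two renormalized solutions of \eqref{problema compl} associated with the same data $f$, $g$, $u_0$, set $w=u_1-u_2$, and aim at $w\equiv 0$. Since $2-\frac1{N+1}<p<2$, estimate \eqref{stimagradiente} gives $\nabla u_i\in L^{m,\infty}(Q_T)$ with $m=\frac{N(p-1)+p}{N+1}>1$, hence $\nabla u_i\in L^1(Q_T)$ and $w$ has a well-defined gradient $\nabla w=\nabla u_1-\nabla u_2\in L^1(Q_T)$; in particular $\nabla T_k(w)=\chi_{\{|w|<k\}}\nabla w$ makes sense for every $k>0$.

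First I would write the renormalized formulation \eqref{58} for $u_1$ and for $u_2$ with the renormalization $S=S_n$, where $S_n'$ equals $1$ on $[-n,n]$, is affine on $[n,2n]\cup[-2n,-n]$ and vanishes elsewhere, so that $\operatorname{supp}S_n''\subset\{n\le|s|\le 2n\}$ and $\|S_n''\|_{L^\infty}\le C/n$. Because $\partial_t S_n(u_i)$ only belongs to $L^1(Q_T)+L^{p'}((0,T);W^{-1,p'}(\Omega))$, one cannot test directly with $T_k(S_n(u_1)-S_n(u_2))$; I would therefore regularize in time (a Landes-type mollification, or the $\mu$-regularization of \cite{blanchard-murat}), subtract the two regularized equations, test with $T_k$ of the regularized difference of $S_n(u_1)$ and $S_n(u_2)$, pass to the limit in the regularization parameter, and only afterwards let $n\to+\infty$ and finally $k\to 0$. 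The standard handling of the parabolic term then yields, for a.e.\ $t\in(0,T)$, the nonnegative contribution $\int_\Omega\Theta_k(S_n(u_1)-S_n(u_2))(t)\,dx$, with $\Theta_k(r)=\int_0^rT_k(s)\,ds$, while the terms generated by $f$ and $g$ are supported in $\{|u_1|>n\}\cup\{|u_2|>n\}$ and tend to $0$ as $n\to+\infty$ by \eqref{87}, \eqref{dato g} and \eqref{12}.

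The core of the argument is the treatment of the remaining terms. Since $a$ is independent of $r$, the diffusion term splits cleanly and, on $\{|u_1|\le n\}\cap\{|u_2|\le n\}\cap\{|S_n(u_1)-S_n(u_2)|<k\}$, the strong monotonicity \eqref{eqog11a} bounds it from below by $\beta\inttOmega\chi_{\{|w|<k\}}|\nabla w|^2(|\nabla u_1|+|\nabla u_2|)^{p-2}\,dx\,dt$, the pieces carrying $S_n''(u_i)a(\nabla u_i)\nabla u_i$ being absorbed, via $\|S_n''\|_{L^\infty}\le C/n$ and \eqref{12}, as $n\to+\infty$. For the $K$-terms $\dv(K(u_i)S_n'(u_i))$ and $S_n''(u_i)K(u_i)\nabla u_i$: the $S_n''$-part vanishes as $n\to+\infty$ by \eqref{eqogb}, and — this is exactly why the $(x,t)$-dependence of $K$ requires Lemma~\ref{Lemma1} — the remaining contribution, after integration by parts and $n\to+\infty$, is $\inttOmega(K(u_1)-K(u_2))\cdot\nabla T_k(w)$, dominated through \eqref{lip K} by $k\inttOmega c(x,t)(1+|u_1|+|u_2|)^\tau|\nabla T_k(w)|$. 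For the $H$-terms $H(\nabla u_i)S_n'(u_i)$, which pass to the limit $n\to+\infty$ thanks to \eqref{eqoga}, the resulting contribution is estimated via \eqref{Lip H} by a multiple of $\inttOmega b(x,t)(1+|\nabla u_1|+|\nabla u_2|)^\sigma|\nabla w|\,|T_k(w)|$. Young's inequality then absorbs a fraction of $\beta\inttOmega\chi_{\{|w|<k\}}|\nabla w|^2(|\nabla u_1|+|\nabla u_2|)^{p-2}$ into the left-hand side, leaving on the right $k^2$ times integrals of the form
\[
\intTOmega c^2(1+|u_1|+|u_2|)^{2\tau}(|\nabla u_1|+|\nabla u_2|)^{2-p}\quad\text{and}\quad \intTOmega b^2(1+|\nabla u_1|+|\nabla u_2|)^{2\sigma+2-p}.
\]
It is here — together with Lemma~\ref{lemma tecnico}, which identifies the limit of the regularized gradient difference and controls these lower order integrals — that the a priori estimates \eqref{stimagradiente}, \eqref{stimau} and the generalized Hölder inequality \eqref{lorh} enter: the exponent windows in \eqref{ip 2} are exactly the ranges of $(r,\tau)$ and $(\lambda,\sigma)$ for which $c\,(1+|u_1|+|u_2|)^\tau(|\nabla u_1|+|\nabla u_2|)^{(2-p)/2}$ and $b\,(1+|\nabla u_1|+|\nabla u_2|)^{\sigma+(2-p)/2}$ belong to $L^2(Q_T)$, so these remainders are finite constants. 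Collecting everything, letting $n\to+\infty$ and dividing by $k$, one obtains $\frac1k\int_\Omega\Theta_k(w(t))\,dx\le Ck+o(1)$, and since $\frac1k\Theta_k(r)\to|r|$ as $k\to 0$ this forces $\|w(t)\|_{L^1(\Omega)}=0$ for a.e.\ $t$, i.e.\ $u_1=u_2$. (Equivalently, the same estimates feed a Gronwall inequality for $t\mapsto\int_\Omega\Theta_k(w(t))\,dx$, with the same conclusion.)

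\textbf{Main obstacle.} The genuinely delicate point, and what separates Theorem~\ref{th3} from the case $p\ge 2$ of Theorem~\ref{th2}, is the degenerate weight $(|\nabla u_1|+|\nabla u_2|)^{2-p}$ with $2-p>0$: after Young's inequality the error terms contain the gradients raised to the positive powers $2-p$ and $2\sigma+2-p$, while $\nabla u_i$ is known only to lie in the weak Lorentz space $L^{(N(p-1)+p)/(N+1),\infty}(Q_T)$ (and $u_i$ in $L^{(N(p-1)+p)/N,\infty}(Q_T)$). Making all the product estimates close with no smallness assumption on $c$ or $b$ requires the sharp Lorentz--Hölder bookkeeping that produces the strict inequalities and the threshold exponent $\frac{p(N+1)-N}{(p-1)(N+1)-N}$ in \eqref{ip 2}; absorbing in particular the $H$-contribution, simultaneously with the $K$-contribution, into the degenerate coercive term is the step I expect to be the hardest, and it is what makes the two technical lemmas indispensable.
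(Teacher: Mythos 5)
Your set-up (renormalize with a plateau function, test with $T_k$ of the difference, use the strong monotonicity \eqref{eqog11a}, control the level-$s$ remainders through Lemma \ref{Lemma1}) is broadly the paper's, but your endgame has a genuine gap at the $H$-term. The coercive quantity produced by \eqref{eqog11a} is $\beta\int\chi_{\{|w|<k\}}|\nabla w|^{2}\,(|\nabla u_1|+|\nabla u_2|)^{p-2}$ with $w=u_1-u_2$: it controls $\nabla w$ \emph{only on the set} $\{|w|<k\}$, because the diffusion term carries $\nabla T_k(w)$. The $H$-contribution, by contrast, is bounded by $k\int_0^t\!\int_\Omega b\,(1+|\nabla u_1|+|\nabla u_2|)^{\sigma}|\nabla w|\,dx\,d\tau$, and the part of this integral living on $\{|w|\ge k\}$ cannot be absorbed by Young into the coercive term: it is of order $k$, not $k^{2}$, so after dividing by $k$ and letting $k\to0$ you are left with $\int_{\{w\neq0\}} b\,(1+|\nabla u_1|+|\nabla u_2|)^{\sigma}|\nabla w|\,dx\,d\tau$ on the right-hand side, which vanishes only if you already know $w=0$. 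The Gronwall variant does not rescue this, since that remainder involves $\nabla w$ and not $\|w(t)\|_{L^1(\Omega)}$ or $\int_\Omega\Psi_k(w(t))$. (Your absorption of the $K$-term is fine, because there the factor $\nabla T_k(w)$ does localize to $\{|w|<k\}$.)

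This is precisely why the paper closes the argument differently, and why Lemma \ref{lemma tecnico} has a role other than the one you assign to it. The $K$- and $H$-terms are not absorbed but kept \emph{linear} in the unknown, estimated via the Lorentz--H\"older inequality \eqref{lorh} together with \eqref{stimagradiente}--\eqref{stimau} by $k\,\Vert c\Vert_{L^{r,1}(Q_{t_1})}\Vert 1+|u_1|+|u_2|\Vert^{\tau}_{L^{\overline q,\infty}}\,\Vert\,|\nabla w|\,\Vert_{L^{\theta,\infty}(Q_{t_1})}$ and $k\,\Vert b\Vert_{L^{\lambda,1}(Q_{t_1})}\Vert 1+|\nabla u_1|+|\nabla u_2|\Vert^{\sigma}_{L^{q,\infty}}\,\Vert\,|\nabla w|\,\Vert_{L^{\theta,\infty}(Q_{t_1})}$; this gives the degenerate energy estimate \eqref{dopolimite p<2}, which is de-weighted by the H\"older step \eqref{prima lemma} (this is where the restriction $\alpha<\frac{2p(N+1)-2N}{N+2}$, hence $p>2-\frac{1}{N+1}$, and the thresholds in \eqref{ip 2} actually originate), and Lemma \ref{lemma tecnico} then converts the level-set bounds into $\Vert\,|\nabla w|\,\Vert_{L^{\theta,\infty}(Q_{t_1})}\le CM$, where $M$ is itself a multiple of $\Vert\,|\nabla w|\,\Vert_{L^{\theta,\infty}(Q_{t_1})}$. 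Choosing $t_1$ small so that \eqref{prima t1} holds (absolute continuity of the $L^{r,1}$ and $L^{\lambda,1}$ norms) forces $\nabla w=0$ on $Q_{t_1}$, and iterating over a finite partition of $[0,T]$ concludes; this small-time/partition bootstrap, entirely absent from your plan, is what replaces any smallness assumption on $b$ and $c$. Incidentally, \eqref{ip 2} is not ``exactly'' the condition for your $L^{2}$ memberships (those follow from \eqref{ip 2} with room to spare); it is the condition for the three-factor Lorentz--H\"older estimates with $\theta<\frac{p(N+1)-N}{N+1}$. Without the norm bootstrap and the time-slicing, your argument does not close.
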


\begin{remark}
Let us compare the assumptions $(\ref{crescita K})$ and  $(\ref{crescita H})$ on the growth condition and the assumptions $(\ref{lip K})$ and $(\ref{Lip H})$ on the locally Lipschitz continuity made on $K(x,t,s)$ and $H(x,t,\xi)$ respectively. Observe that assumption $(\ref{lip K})$  ($(\ref{Lip H})$ respectively) implies a growth condition on $K(x,t,s)$ (on $H(x,t,\xi)$ respectively), that can be more restrictive than $(\ref{crescita K})$ ($(\ref{crescita H})$ respectively), depending on the value of $\tau$ ($\sigma$ respectively). \\
The model function $a(x,t,\xi)$ which satisfies assumptions
$(\ref{crescita a})$, $(\ref{eqog11})$ or $(\ref{eqog11a})$ is
\[
a(x,t,\xi )=\left\{
\begin{array}{lll}
a(x,t)\left\vert \xi \right\vert ^{p-2}\xi  & \text{if} & 2-\frac{1}{N+1}<p<2,
\\
a(x,t)(1+\left\vert \xi \right\vert ^{2})^{\frac{p-2}{2}}\xi  & \text{if} &
p\geq 2,%
\end{array}%
\right.
\]%
where $
a(x,t)\in L^{\infty }(Q_{T})$ and $a(x,t)>\beta >0.$\\
Examples of functions $K(x,t,s)$ and $H(x,t,\xi)$ are given by
\[
K(x,t,s )=c(x,t)(1+\left\vert s \right\vert )^{\overline{\gamma }%
}
\quad
\text{ with }
 \quad \overline{\gamma }=\min \left\{ \gamma,\tau +1\right\}
\]%
and \[ H(x,t,\xi )=b(x,t)(1+\left\vert \xi \right\vert
)^{\overline{\delta }} \quad \text{ with }\quad \overline{\delta
}=\min \left\{ \delta,\sigma +1\right\},
\]%
where $c(x,t)\in L^{r,1}(Q_{T})$ and  $b(x,t)\in L^{\lambda,1 }(Q_{T})$ with
\[
\left\{
\begin{array}{lll}
r>\frac{p(N+1)-N}{(p-1)(N+1)-N} & \text{if} & 2-\frac{1}{N+1}<p<2, \\
r\geq N+2 & \text{if} & p\geq 2%
\end{array}%
\right.
\]%
and
\[
\left\{
\begin{array}{lll}
\lambda >\frac{p(N+1)-N}{(p-1)(N+1)-N} & \text{if} & 2-\frac{1}{N+1}<p<2, \\
\lambda \geq N+2 & \text{if} & p \geq 2.%
\end{array}%
\right.
\]

\end{remark}

%   When $p<2$ we need to assume that $g=0$. Indeed our method relies on
%   Boccardo-Gallou\"et kind estimates and it is well know that if $u_{1}$
%   and $u_{2}$ are two renormalized solutions of $-\Delta_{p}
%   u_{i}=f_{i}-\dv(g_{i})$ ($i\in\{1,2\})$ with boundary Dirichlet
%   condition then    we cannot expect to have $T_{k}(u_{1}-u_{2})$
%   belonging to $W^{1,p}_{0}(\Omega)$.
% \end{remark}

\section{Proof of the results}

%In what follows we need the following lemma (see also Lemma 6 of
%\cite{blanchard-murat-redvan}).

This section is devoted to prove Theorems \ref{th1}, \ref{th2} and
\ref{th3}.
We start by a technical lemma which is similar to Lemma 6 of
\cite{blanchard-murat-redvan} for a different parabolic equation with
$L^1$ data. It allows to control the behavior of
some quantities which appear in  the uniqueness process. We stress
that our proof is different to the one in \cite{blanchard-murat-redvan} and
uses only the fact
that two renormalized solutions of \eqref{problema compl} verify
\eqref{12} and \eqref{eqogb} (notice that \eqref{eqogb}  is a consequence
of \eqref{12} and the growth assumption of $K$). See also \cite{blanchard-guibe-redwane} for such a generalization on parabolic equation of the kind
$\frac{\partial b(u)}{\partial
    t}-\dv\left( a(x,t,u,\nabla u)\right)=f+ \dv g.$

\begin{lemma}
\label{Lemma1} Under the assumptions \eqref{ellit}-\eqref{uzero}, let
  $u$ and $v$ be two  renormalized solutions to Problem
\eqref{problema compl}. Let us define for any $0<k<s$
\begin{equation}
  \label{eqog1}
  \begin{split}
    \Gamma&(u,v,s,k) =  \\  &  \int_{\{s-k <  |u| < s+k\}}\Big( a(u,\nabla u)
    \nabla u +
    |K(u)|\, |\nabla u| + |g|^{p'}  \Big) dx dt
    \\ & {} + \int_{\{s-k < |v| < s+k\}} \Big( a(v,\nabla v)\nabla v
     +  |K(v)|\, |\nabla v| + |g|^{p'}\Big) dx dt
  \end{split}
\end{equation}
and for any $0<r<s$
\begin{equation}
  \label{eqog1aa}
  \begin{split}
    \Theta(u,v,s,r)={} &\int_{\{s-r <  |u| < s\}} \Big( a(u,\nabla u)
    \nabla u + |K(u)|\, |\nabla u| \Big)
    dxdt \\
    & + \int_{\{s-r <  |v| < s\}}  \Big( a(v,\nabla v)\nabla v
     + |K(v)|\, |\nabla v|\Big)
    dxdt.
  \end{split}
\end{equation}
Then we have for any $r>0$
\begin{equation}
\liminf_{s\rightarrow \infty} \big( \limsup_{k\rightarrow 0} \frac{1}{k}
\Gamma(u,v,s,k)  + \Theta(u,v,s,r)\big)=0.
\label{eqog2}
\end{equation}
% and in particular%
% \[
% \underset{s\rightarrow\infty}{\underline{\lim}}\text{ }\underset
% {k\rightarrow0}{\overline{\lim}}\frac{1}{k}\left(  \int\int_{\left\{
% s-k\leq\left\vert u\right\vert \leq s+k\right\}  }\left\vert \nabla
% u\right\vert ^{p}dxdt+\int\int_{\left\{  s-k\leq\left\vert v\right\vert \leq
% s+k\right\}  }\left\vert \nabla v\right\vert ^{p}dxdt\right)  =0.
% \]
\end{lemma}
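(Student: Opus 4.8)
The plan is to exploit the two key properties available for any renormalized solution: the energy-decay condition \eqref{12} and the companion property \eqref{eqogb} for the $K$-term, together with the fact that $g\in (L^{p'}(Q_T))^N$. The strategy is to bound, for each fixed $s$, the quantity $\limsup_{k\to 0}\frac1k\Gamma(u,v,s,k)$ by a derivative-in-$s$ of a monotone function, so that after integrating (or rather after taking $\liminf_{s\to\infty}$) this contribution vanishes along a suitable sequence; and to treat $\Theta(u,v,s,r)$ directly as the tail of a convergent set-function.

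First I would introduce, for the solution $u$, the nonnegative functions
\begin{equation}
  \phi_u(s)=\int_{\{|u|<s\}}\big(a(u,\nabla u)\nabla u+|K(u)|\,|\nabla u|+|g|^{p'}\big)\,dxdt,
  \label{phiu}
\end{equation}
and similarly $\phi_v$, and set $\Phi=\phi_u+\phi_v$. Each of these is nondecreasing in $s$; moreover by \eqref{12}, \eqref{eqogb}, \eqref{ellit}, \eqref{crescita a} and $g\in(L^{p'}(Q_T))^N$ the integrands are in $L^1(Q_T)$, so $\Phi(s)$ is finite for every $s$ and converges to a finite limit $\Phi(\infty)$ as $s\to\infty$. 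Since $\{s-k<|u|<s+k\}\subset\{|u|<s+k\}\setminus\{|u|\le s-k\}$, we get for $0<k<s$
\begin{equation}
  \Gamma(u,v,s,k)\le \Phi(s+k)-\Phi(s-k)
  \label{gammaphi}
\end{equation}
(here one uses that the integrand is nonnegative and that $\{|u|\le s-k\}$ and $\{s-k<|u|<s+k\}$ are disjoint). Dividing by $k$ and letting $k\to 0$, at every point $s$ where $\Phi$ is differentiable we obtain $\limsup_{k\to0}\frac1k\Gamma(u,v,s,k)\le 2\Phi'(s)$. Because $\Phi$ is monotone and bounded, it is differentiable a.e.\ and $\int_0^\infty \Phi'(s)\,ds\le \Phi(\infty)-\Phi(0)<+\infty$; hence $\liminf_{s\to\infty}\Phi'(s)=0$, and in fact there is a sequence $s_n\to\infty$ with $\Phi'(s_n)\to0$.

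For the $\Theta$ term, note that for fixed $r>0$ the set-function $s\mapsto \int_{\{s-r<|u|<s\}}(a(u,\nabla u)\nabla u+|K(u)|\,|\nabla u|)\,dxdt$ is, up to a bounded number of overlaps, controlled by the tail $\int_{\{|u|>s-r\}}(\cdots)\,dxdt$, which tends to $0$ as $s\to\infty$ precisely because these integrands are in $L^1(Q_T)$ (again by \eqref{12}, \eqref{eqogb}, \eqref{ellit}, \eqref{crescita a}). Therefore $\Theta(u,v,s,r)\to 0$ as $s\to\infty$ for every fixed $r$. Combining this with the previous paragraph: along the sequence $s_n$,
\begin{equation}
  0\le \limsup_{k\to0}\frac1k\Gamma(u,v,s_n,k)+\Theta(u,v,s_n,r)\le 2\Phi'(s_n)+\Theta(u,v,s_n,r)\longrightarrow 0,
  \label{conclu}
\end{equation}
which yields \eqref{eqog2}.

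The main obstacle I anticipate is making \eqref{gammaphi} and the $\Theta$ estimate rigorous in terms of the precise sets $\{|u|<s\}$ versus $\{|u|\le s\}$ and the behavior at the (at most countably many) values of $s$ where $\{|u|=s\}$ has positive measure; on such a null set of $s$'s one simply discards those values when extracting the sequence $s_n$. A second, more substantive point is justifying that the integrand $a(u,\nabla u)\nabla u+|K(u)|\,|\nabla u|$ is genuinely in $L^1(Q_T)$ and not merely that its truncated integrals vanish in the averaged sense of \eqref{12} and \eqref{eqogb}: here one writes $\int_{\{|u|<s\}}a(u,\nabla u)\nabla u\,dxdt=\int_{Q_T}a(T_s(u),\nabla T_s(u))\nabla T_s(u)\,dxdt$ and uses \eqref{9} together with \eqref{crescita a} and \eqref{ellit} to see this is finite for each fixed $s$, and then \eqref{12} guarantees that the limit $\Phi(\infty)$ is finite — it is \eqref{12} and \eqref{eqogb}, not mere truncation, that prevent $\Phi(s)$ from blowing up as $s\to\infty$. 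Everything else is the routine monotone-function argument sketched above.
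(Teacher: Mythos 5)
There is a genuine gap, and it sits at the heart of your argument: you assert that $a(u,\nabla u)\nabla u+|K(u)|\,|\nabla u|$ belongs to $L^{1}(Q_{T})$, so that $\Phi(s)$ converges to a finite limit $\Phi(\infty)$ as $s\to\infty$. Neither \eqref{12} nor \eqref{eqogb} gives this. These conditions only say that
\[
\frac{1}{n}\int_{\{|u|\le n\}}a(u,\nabla u)\nabla u\,dxdt\to 0
\quad\text{and}\quad
\frac{1}{n}\int_{\{|u|<n\}}|K(u)|\,|\nabla u|\,dxdt\to 0,
\]
i.e.\ that $\Phi(s)=o(s)$, a \emph{sublinear growth} statement; for a renormalized solution with $L^{1}$ data the global energy $\int_{Q_T}a(u,\nabla u)\nabla u$ is in general infinite (the truncated energies typically grow like $k$, and \eqref{12} only improves this to $o(k)$). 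Your bound $\limsup_{k\to0}\frac1k\Gamma(u,v,s,k)\le 2\Phi'(s)$ and the extraction of $s_n$ with $\Phi'(s_n)\to0$ can still be salvaged from sublinearity alone (if $\Phi'\ge\varepsilon$ a.e.\ on $[s_0,\infty)$ then $\Phi(s)\ge\varepsilon(s-s_0)+\Phi(s_0)$, contradicting $\Phi(s)=o(s)$), but your treatment of $\Theta$ cannot: the claim that $\Theta(u,v,s,r)\to0$ as $s\to\infty$ for each fixed $r$ rests entirely on the false finiteness of $\Phi(\infty)$, since $\Theta$ is bounded by a tail of an integral that need not be finite. Without it you only control $\Theta$ in an averaged sense, namely $\int_{n_0}^{s}\Theta(u,v,\xi,r)\,d\xi\le r\big(\int_{\{|u|<s\}}|K(u)||\nabla u|+\int_{\{|v|<s\}}|K(v)||\nabla v|\big)=o(s)$ by \eqref{eqogb}, which gives smallness of $\Theta$ only along \emph{some} sequence of $s$'s, a priori different from your sequence $s_n$ on which $\Phi'(s_n)\to0$.

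This coupling of the two terms along a common sequence is exactly the difficulty the paper's proof is designed to handle: it argues by contradiction, assuming $\limsup_{k\to0}\frac1k\Gamma(u,v,s,k)+\Theta(u,v,s,r)\ge\varepsilon_0$ for all $s\ge n_0$, translates the $\Gamma$ part into a lower bound for $F'(s)$ (with $F$ the analogue of your $\Phi$, monotone and a.e.\ differentiable, $F(s)-F(\eta)\ge\int_\eta^s F'$), integrates the \emph{combined} inequality in $s$, bounds $\int_{n_0}^{s}\Theta\,d\xi$ by the $o(s)$ quantity above, and contradicts $F(s)=o(s)$. If you rewrite your argument in that form — working only with the sublinearity $F(s)/s\to0$ and never invoking global $L^{1}$ integrability of the energy terms — it becomes correct and essentially coincides with the paper's proof; as written, the step ``$\Phi(\infty)<\infty$, hence $\Theta\to0$'' is unjustified and fails in general.
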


\begin{proof}
We argue  by contradiction. Let $r$ be a positive real number.
If the thesis of lemma is not true, let
$\varepsilon_{0}>0$ and let $n_{0}$ be an integer  such that for every real number
$s\geq n_{0}$ we have%
\begin{equation}
 \limsup_{k\rightarrow 0} \frac{1}{k} \Gamma(u,v,s,k) + \Theta(u,v,s,r)
 \geq\varepsilon_{0}.
 \label{eqog0}
\end{equation}

Let us consider the function
\begin{equation*}
  \begin{split}
    F(s)= & \int_{\{ |u|< s\}} \Big( a(u,\nabla u) \nabla u
    +  |K(u)|\, |\nabla u| + |g|^{p'} \Big)    dx dt \\
    & {} +
    \int_{\{ |v| < s\}} \Big( a(v,\nabla v)
    \nabla v + |K(v)|\, |\nabla  v| +     |g|^{p'}\Big)  dxdt.
  \end{split}
\end{equation*}

% \int{\displaystyle\int_{\left\{  \left\vert u\right\vert \leq s\right\}
% }} a(u,\nabla u)\nabla udxdt+\int{\displaystyle\int_{\left\{  \left\vert
% v\right\vert \leq s\right\}  }} a(v,\nabla v)\nabla vdxdt.
% \]
Due to \eqref{ellit} the function  $F$ is  monotone increasing.
It follows (see e.g. \cite{royden}) that  $F$ is derivable almost
everywhere, with $F'$ measurable,  and that we
have for  any $s>\eta>0$
\begin{equation}
F(s)-F(\eta)\geq\int_{\eta}^{s}F^{\prime}(\xi)d\xi \label{teorema fondamentale}%
\end{equation}
and  for almost any $s>0$
\begin{equation*}
  \begin{split}
    F^{\prime}&(s) =  \\
    & \frac{1}{2}\limsup_{k\rightarrow 0} \frac{1}{k}
\Bigg[    \int_{\{s-k\leq |u|<s+k\}} \Big( a(u,\nabla u) \nabla u
    +  |K(u)|\, |\nabla u| + |g|^{p'} \Big)    dx dt \\
    & {} + \int_{\{ s-k\leq |v| < s+k\}} \Big( a(v,\nabla v) \nabla v
    + |K(v)|\, |\nabla v| + |g|^{p'}\Big) dxdt\Bigg].
  \end{split}
  % \text{ }\underset{k\rightarrow0}{\overline{\lim}}\frac{1}%
  % {k}\left(  \int\int_{\left\{  \xi\leq\left\vert u\right\vert \leq
  %     \xi+k\right\}  }a(u,\nabla u)\nabla udxdt+\text{ }\int\int_{\left\{  \xi
  %     \leq\left\vert v\right\vert \leq\xi+k\right\}  }a(v,\nabla v)\nabla
  %   vdxdt\right)  .
  % \label{derivata}%
\end{equation*}
Moreover due to \eqref{12} and \eqref{eqogb} and since $g$ belongs to
$(L^{p'}(Q_{T}))^{N}$  we have
\begin{equation}
  \label{eqog00}
  \lim_{s\rightarrow +\infty} \frac{F(s)}{s}=0.
\end{equation}

Due to the definition of $\Gamma(u,v,s,k)$, inequality \eqref{eqog0}
leads to
\[
F'(\xi) + \frac{1}{2}\Theta(u,v,\xi,r) \geq
\frac{\varepsilon_{0}}{2}
\]
for almost $\xi\geq n_{0}$. From  (\ref{teorema fondamentale})  it follows that
\[
\frac{1}{s-n_{0}}\Big( F(s)+ \frac{1}{2} \int_{n_{0}}^{s} \Theta(u,v,\xi,r)
d\xi\Big) \geq \frac{\varepsilon_{0}}{2} +\frac{F(n_{0})}{s-n_{0}}
\quad \text{ for }s> n_{0}.
\]
Due to the definition of $\Theta$, a few computations give
\begin{equation*}
  \begin{split}
 \int_{n_{0}}^{s} \Theta(u,v,\xi,r)
d\xi \leq {} & r \Big(\int_{\{|u|<s\}} |K(u)|\, |\nabla u| dxdt \\
& {} \quad + \int_{\{|v|<s\}} |K(v)|\, |\nabla v| dxdt\Big).
\end{split}
\end{equation*}
It follows that
\begin{multline}
\frac{1}{s-n_{0}}\Big( F(s)+ \frac{r}{2}
\big(\int_{\{|u|<s\}} |K(u)|\, |\nabla u| dxdt \\
 +
\int_{\{|v|<s\}} |K(v)|\, |\nabla v| dxdt\big)\Big)
\geq \frac{\varepsilon_{0}}{2} + \frac{F(n_{0})}{s-n_{0}} \quad \text{ for }s> n_{0}.
\end{multline}
The last inequality contradicts  \eqref{12} and \eqref{eqog00}.
\end{proof}

\begin{proof}[Proof of Theorem \ref{th1}]
The strategy is similar to the proof of Theorem 2 in
\cite{blanchard-murat-redvan}.  It consists to define a smooth
approximation $T_{s}^{\sigma}$
of the truncation $T_{s}$ and to consider two renormalized solutions $u$ and
$v$ to  Problem ($\ref{problema compl}$) for the same data $f,g$ and
$u_{0}$. In Step 1 we plug the test function
$\frac{1}{k}T_{k}\left(  T_{s}^{\sigma
}(u)-T_{s}^{\sigma}(v)\right)  $ in the difference of the equations
(\ref{58}) for $u$ and $v$ in which we have taken
$S=T_{s}^{\sigma}$. This process then leads to equation
\eqref{equazione diff con test}.
In Step 2  we study the behavior of the terms of
\eqref{equazione diff con test} with respect to $\sigma$, $k$ and
$s$, with the help of Lemma \ref{Lemma1}.
In Step 3 we then pass to the limit when
$\sigma\rightarrow0$, $k\rightarrow0$ and $s\rightarrow +\infty$.

\par\medskip

{\slshape Step 1.} Let $u$ and $v$ be two renormalized solutions to Problem
($\ref{problema compl}$) for the same data $f,g$ and $u_{0}.$ For every real
number $s>0$ and $\sigma>0,$ let $T_{s}^{\sigma}$ be the function defined by%

\begin{equation}
\begin{cases}
T_{s}^{\sigma}(0)=0, & \\
\left(  T_{s}^{\sigma}\right)  ^{\prime}(r)=1 &
\text{for }\left\vert r\right\vert <s,\\
\left(  T_{s}^{\sigma}\right)  ^{\prime}(r)=\frac{1}{\sigma}(s+\sigma
-\left\vert r\right\vert ) & \text{for }s\leq\left\vert r\right\vert
\leq s+\sigma,\\
\left(  T_{s}^{\sigma}\right)  ^{\prime}(r)=0 &
\text{for }\left\vert r\right\vert
>s+\sigma.
\end{cases}  \label{test}%
\end{equation}

We take $S=T_{s}^{\sigma}$ in (\ref{58}) for $u$ and $v.$ Subtracting these
two equations and plugging the test function $\frac{1}{k}T_{k}\left(
T_{s}^{\sigma}(u)-T_{s}^{\sigma}(v)\right)  $, we obtain upon
integration on $(0,t)$, that for every
$k>0$, $s>0$, $\sigma>0$,
\begin{multline}
\frac{1}{k}\int_{0}^{t}\left\langle \frac{\partial}{\partial
t}\left[  T_{s}^{\sigma}(u)-T_{s}^{\sigma}(v)\right]  ,T_{k}\left(
T_{s}^{\sigma}(u)-T_{s}^{\sigma}(v)\right)  \right\rangle d{\tau}
\\
  {} + \frac{1}{k} \big(A_{s,k}^{\sigma
}(t) +\widetilde{A}_{s,k}^{\sigma}(t) \big) \label{equazione diff con test}\\
 {} =   \frac{1}{k}\big( C_{s,k}^{\sigma}(t)+\widetilde{C}_{s,k}^{\sigma}(t)+
 F_{s,k}^{\sigma}(t)+G_{s,k}^{\sigma}(t)+\widetilde{G}_{s,k}^{\sigma}(t)\big)
\end{multline}
for almost any $t\in(0,T)$, where $\left\langle \text{ },\text{ }\right\rangle $ denotes the duality
between $L^{1}(\Omega)+ W^{-1,p^{\prime}}(\Omega)$ and
$L^{\infty}(\Omega)\cap W_{0}^{1,p}(\Omega)$ and where
\begin{align*}
A_{s,k}^{\sigma}(t)  = &
\int_{0}^{t}\int_{\Omega}^{}
\left[  \left(  T_{s}^{\sigma}\right)  ^{\prime}(u)a(u,\nabla
u)-\left(  T_{s}^{\sigma}\right)  ^{\prime}(v)a(v,\nabla v)\right] \\
& {} \times \nabla
T_{k}\left(  T_{s}^{\sigma}(u)-T_{s}^{\sigma}(v)\right) dx d\tau,
\end{align*}
\begin{align*}
\widetilde{A}_{s,k}^{\sigma}(t) =  &
\int_{0}^{t}\int_{\Omega}^{} \left(  T_{s}^{\sigma}\right)
^{\prime\prime}(u)a(u,\nabla u)\nabla uT_{k}\left(  T_{s}^{\sigma}%
(u)-T_{s}^{\sigma}(v)\right) dx d\tau \\
& {} -
\int_{0}^{t}\int_{\Omega}^{} \left(  T_{s}^{\sigma}\right)  ^{\prime\prime}(v)a(v,\nabla v)\nabla
vT_{k}\left(  T_{s}^{\sigma}(u)-T_{s}^{\sigma}(v)\right) dxd\tau ,
\end{align*}%
\begin{align*}
C_{s,k}^{\sigma}(t)= &
\int_{0}^{t}\int_{\Omega}^{} \left[  \left(  T_{s}^{\sigma}\right)  ^{\prime
}(u)K(u)-\left(  T_{s}^{\sigma}\right)
^{\prime}(v)K(v)\right]
\\
& \times \nabla T_{k}\left(  T_{s}^{\sigma}(u)-T_{s}^{\sigma}(v)\right) dxd\tau ,
\end{align*}
\begin{align*}
\widetilde{C}_{s,k}^{\sigma}(t)= &
\int_{0}^{t}\int_{\Omega}^{}
\left[  \left(  T_{s}^{\sigma}\right)
^{^{\prime\prime}}(u)K(u)\nabla u-\left(  T_{s}^{\sigma}\right)
^{\prime\prime}(v)K(v)\nabla v\right] \\
& \times T_{k}\left(  T_{s}^{\sigma
}(u)-T_{s}^{\sigma}(v)\right) dxd\tau,
\end{align*}
\begin{gather*}
F_{s,k}^{\sigma}(t)=
\int_{0}^{t}\int_{\Omega}^{} f\left[  \left(  T_{s}^{\sigma}\right)  ^{\prime}(u)-\left(
T_{s}^{\sigma}\right)  ^{\prime}(v)\right]  T_{k}\left(  T_{s}^{\sigma
}(u)-T_{s}^{\sigma}(v)\right) dxd\tau,
\\
G_{s,k}^{\sigma}(t)=
\int_{0}^{t}\int_{\Omega}^{} g\left[  \left(  T_{s}^{\sigma}\right)  ^{\prime}(u)-\left(
T_{s}^{\sigma}\right)  ^{\prime}(v)\right]  \nabla T_{k}\left(  T_{s}^{\sigma
}(u)-T_{s}^{\sigma}(v)\right)dxd\tau,
\\
\widetilde{G}_{s,k}^{\sigma}(t)=
\int_{0}^{t}\int_{\Omega}^{} g\nabla\left[  \left(  T_{s}^{\sigma}\right)
^{\prime}(u)-\left(  T_{s}^{\sigma}\right)  ^{\prime}(v)\right]  T_{k}\left(
T_{s}^{\sigma}(u)-T_{s}^{\sigma}(v)\right)  dxd\tau.
\end{gather*}
In order to pass to the limit in (\ref{equazione diff con test}) when $\sigma
$\ $\rightarrow0,k\rightarrow0$ and $s\rightarrow+\infty,$ we observe that by
(\ref{test}) we have for almost any $t\in(0,T)$
\begin{equation}
T_{s}^{\sigma}(u)\rightarrow T_{s}(u)\text{ in }L^{p}((0,t);W_{0}^{1,p}%
(\Omega))\text{\ and }\text{a.e. in }\Omega\times(0,t) \label{conv
  regolarizzate}%
\end{equation}
and%
\begin{equation}
\left(  T_{s}^{\sigma}\right)  ^{\prime}(u)\rightarrow\chi_{\left\{
\left\vert u\right\vert \leq s\right\}  }\text{ in }L^{q}(\Omega\times
(0,t))\text{ and
a.e. in }\Omega\times(0,t) \label{conv derivata regolarizzata}%
\end{equation}
for every $1<q<+\infty$ for fixed $s>0$ when $\sigma$ tends to zero.

\par
By defining
$\Psi_{k}(r)=\int_{0}^{r} T_{k}(s) ds$, an integration by part (see
\cite{BoccMuratPuel}) gives that for almost any $t\in(0,T)$
\begin{multline}
  \label{eqog2aa}
\frac{1}{k}\int_{0}^{t}\left\langle \frac{\partial}{\partial t}\left[  T_{s}^{\sigma
}(u)-T_{s}^{\sigma}(v)\right] ,  T_{k}\left(  T_{s}^{\sigma}(u)-T_{s}^{\sigma
}(v)\right)  \right\rangle d{\tau} \\% \label{tempo}\\
= \frac{1}{k} \int_{\Omega}\Psi_{k}\left(
T_{s}^{\sigma}(u)(t)-T_{s}^{\sigma}(v)(t)\right)  dx.
\end{multline}
We deduce from the above equality that for almost any $t\in (0,T)$
\begin{equation}
  \label{tempo}
  \begin{split}
    \lim_{k\rightarrow
      0}\lim_{\sigma\rightarrow 0}{} & {} \frac{1}{k}\int_{0}^{t}\left\langle
      \frac{\partial}{\partial
        t}\left[ T_{s}^{\sigma }(u)-T_{s}^{\sigma}(v)\right] ,
      T_{k}\left( T_{s}^{\sigma}(u)-T_{s}^{\sigma }(v)\right)
    \right\rangle d{\tau} \\
 &   = \int_{\Omega}|T_{s}(u)(t)-T_{s}(v)(t)| dx.
  \end{split}
\end{equation}

\par\medskip

{\slshape Step 2.}
Reasoning as in \cite{blanchard-murat-redvan} we have
\begin{equation}
\limsup_{k\rightarrow0}\lim_{\sigma\rightarrow 0}
\frac{1}{k}{A_{s,k}^{\sigma}(t)}%
\geq0,\quad\text{ for every }s>0, \label{A}%
\end{equation}%
\begin{equation}
\lim_{s\rightarrow +\infty}
  \lim_{k\rightarrow0}\lim_{\sigma\rightarrow 0}
\frac{1}{k}F_{s,k}^{\sigma}(t) =0, \label{F}%
\end{equation}%
for almost any $t\in(0,T)$.
\par
We give the argument here for completeness. Due to \eqref{conv
  derivata regolarizzata} and \eqref{conv regolarizzate} and
with the help of \eqref{ellit} we have for almost any $t\in(0,T)$
\begin{equation*}
  \begin{split}
  \lim_{\sigma\rightarrow 0} \frac{1}{k} A_{s,k}^{\sigma}(t)= &
  \frac{1}{k} \int_{0}^{t} \int_{\Omega}
  [a(T_{s}(u),DT_{s}(u))-a(T_{s}(v),DT_{s}(v))] \\
  & {} \times
  DT_{k}(T_{s}(u)-T_{s}(v)) dx d\tau
  \end{split}
\end{equation*}
and which can written as
\begin{equation}\label{recall1}
  \begin{split}
  \lim_{\sigma\rightarrow 0} \frac{1}{k} A_{s,k}^{\sigma}(t)= &
  \frac{1}{k} \int_{0}^{t} \int_{\Omega}
  [a(T_{s}(u),DT_{s}(u))-a(T_{s}(u),DT_{s}(v))] \\
  & {} \times
  DT_{k}(T_{s}(u)-T_{s}(v)) dx d\tau
  \\
  & {}
  + \frac{1}{k} \int_{0}^{t} \int_{\Omega}
  [a(T_{s}(u),DT_{s}(v))-a(T_{s}(u),DT_{s}(v))] \\
  & {} \times
  DT_{k}(T_{s}(u)-T_{s}(v)) dx d\tau.
  \end{split}
\end{equation}
Since the operator $a$ is monotone (see \eqref{monotonia}) the first
term of the right hand side of \eqref{recall1} is non negative. It
remains to prove that the second term goes to zero as $k$ goes to
zero. Indeed using the local Lipschitz condition \eqref{loclipa} on
$a$ we get
\begin{equation*}
  \begin{split}
  \frac{1}{k}\bigg| \int_{0}^{t}  & \int_{\Omega}
  [a(T_{s}(u),DT_{s}(v))-a(T_{s}(u),DT_{s}(v))]
   DT_{k}(T_{s}(u)-T_{s}(v)) dx d\tau \bigg|
   \\
  {} \leq {}  & {} \frac{1}{k} \int_{0}^{t}\int_{\Omega}
  \chi_{\{|T_{s}(u)-T_{s}(v)|<k\} }|T_{s}(u)-T_{s}(v)|
   (L_{s}(x,t)+ \gamma_{s}|DT_{s}(v)|^{p-1}) \\
   & {}
   \times |DT_{s}(u)+DT_{s}(v)| dxd\tau
   \\
   \leq {} & \int_{\{0<|T_{s}(u)-T_{s}(v)|<k\} }  (L_{s}(x,t)+
   \gamma_{s}|DT_{s}(v)|^{p-1})
    |DT_{s}(u)+DT_{s}(v)| dxd\tau
\end{split}
\end{equation*}
Due to the regularity of $T_{s}(u)$, $T_{s}(v)$ and $L_{s}$ we have
\[ (L_{s}(x,t)+   \gamma_{s}|DT_{s}(v)|^{p-1})
|DT_{s}(u)+DT_{s}(v)| \in L^{1}(Q_{T}).
\]
Since $\chi_{\{|T_{s}(u)-T_{s}(v)|<k\} }$ tends to zero almost
everywhere in $Q_{T}$ as $k$ goes to zero, the Lebesgue dominated
convergence allows us to conclude that \eqref{A} holds.
\par
As far as \eqref{F} is concerned we have for almost any $t\in(0,T)$
\begin{equation*}
  \lim_{\sigma\rightarrow 0}
\frac{1}{k}F_{s,k}^{\sigma}(t)
=\frac{1}{k}\int_{0}^{t}\int_{\Omega} f (\chi_{\{|u|\leq
  s\}}-\chi_{\{|v|\leq s}) T_{k}(T_{s}(u)-T_{s}(v)) dxd\tau
\end{equation*}
so that for almost any $t\in(0,T)$
\begin{equation*}
\lim_{k\rightarrow 0}  \lim_{\sigma\rightarrow 0}
\frac{1}{k}F_{s,k}^{\sigma}(t)
= \int_{0}^{t}\int_{\Omega} f \times (\chi_{\{|u|\leq  s\}}
-\chi_{\{|v|\leq s}) \sign (u-v) dxd\tau,
\end{equation*}
where $\sign(r)=r/|r|$ for any $r\neq 0$ and $\sign(0)=0$.
Since $u$ and $v$ are finite almost everywhere in $\Omega\times(0,T)$
and since $f$ belongs to $L^{1}(Q_{T})$ the Lebesgue dominated
convergence theorem implies \eqref{F}.

\par\medskip

Now we claim that for almost any $t\in(0,T)$
\begin{equation}
  \label{eqog2a}
  \frac{1}{k}\Big(\big|\widetilde{A}_{s,k}^{\sigma}(t)\big|+
  \big|\widetilde{C}_{s,k}^{\sigma}(t)\big| +
  \big|\widetilde{G}_{s,k}^{\sigma}(t)\big| \Big)
  \leq \frac{M_{1}}{\sigma} \Gamma(u,v,s,\sigma),
\end{equation}
where $M_{1}$ is a constant independent  of $s$, $k$ and $\sigma$ and
where $\Gamma$ is defined in Lemma \ref{Lemma1}.

 Using the definition \eqref{test} of $T_{s}^{\sigma} $, recalling
 that $\nabla u=0$ almost everywhere on $\{ (x,t)\,;\, u(x,t)=r\}$ for
 any $r\in\mathbb{R}$ and since $a(x,t,r,\xi)\xi \geq 0$ we obtain that for any $\sigma$ and any $k>0$
\begin{align}
 \frac{1}{k}\big|\widetilde{A}_{s,k}^{\sigma}(t)\big|
 \leq{} & {} \frac{1}{\sigma}
\bigg[ \int_{0}^{t}\int_{\Omega}
 \chi_{_{\left\{  s < \left\vert
u\right\vert < s+\sigma\right\}  }}a(u,\nabla u)\nabla u dx d\tau
\notag \\
& \qquad + \int_{0}^{t}\int_{\Omega}
 \chi_{_{\left\{  s < \left\vert
v\right\vert < s+\sigma\right\}  }}a (v,\nabla v)\nabla
v dxd\tau\bigg]
\notag \\
  \leq {} & {} \frac{1}{\sigma}
\left[  \int_{_{\left\{  s < \left\vert
u\right\vert < s+\sigma\right\}  }}a(u,\nabla u)\nabla u dx d\tau
+\int_{_{\left\{
s < \left\vert v\right\vert < s+\sigma\right\}  }}a(v,\nabla v)\nabla
v dxd\tau\right].
\label{eqog3a}
\end{align}
Similarly we have for any $\sigma$ and any $k>0$
\begin{multline}
\label{eqog3}
\frac{1}{k}  \big|\widetilde{C}_{s,k}^{\sigma}(t)\big| \leq
\frac{1}{\sigma}\bigg[
\int_{_{\left\{  s < \left\vert
u\right\vert < s+\sigma\right\}  }} |K(u)| \, |\nabla u| dxd\tau
 \\
{}+ \int_{_{\left\{  s < \left\vert
v\right\vert < s+\sigma\right\}  }} |K(v)| \, |\nabla v| dxd\tau
\bigg].
\end{multline}
As far as $\widetilde{G}_{s,k}(t)$ is concerned, we have for any
$\sigma$ and any $k>0$
\begin{align*}
  \frac{1}{k}|\widetilde{G}_{s,k}(t)| & \leq
\frac{1}{\sigma}\bigg[
\int_{_{\left\{  s < \left\vert
u\right\vert < s+\sigma\right\}  }} |g| \, |\nabla u| dxd\tau
+ \int_{_{\left\{  s < \left\vert
v\right\vert < s+\sigma\right\}  }} |g| \, |\nabla v| dxd\tau
\bigg].
\end{align*}
{}From assumption \eqref{ellit} together with Young inequality it
follows that
\begin{equation}
  \label{eqog4}
  \begin{split}
    \frac{1}{k} |\widetilde{G}_{s,k}(t)| \leq & \frac{M_{1}}{\sigma}\bigg( \int_{\{s <
      |u| < s+\sigma\}}\Big( a(u,\nabla u) \nabla u + |g|^{p'} \Big)
    dx d\tau \\ & {} + \int_{\{s < |v| < s+\sigma\}} \Big( a(v,\nabla
    v)\nabla v + |g|^{p'}\Big) dx d\tau\bigg),
  \end{split}
\end{equation}
where $M_{1}$ is a generic constant depending upon $p$ and $\alpha_{0}$.
Estimates \eqref{eqog3a}--\eqref{eqog4} allow us to deduce that
\eqref{eqog2a} holds.

\par\medskip

Now we prove that for almost any $t\in(0,T)$
\begin{equation}
  \label{eqog4a}
\limsup_{\sigma\rightarrow 0}  \frac{1}{k}\Big( \big|{C}_{s,k}^{\sigma}(t)\big| +
  \big|{G}_{s,k}^{\sigma}(t)\big| \Big)
  \leq \frac{M_{1}}{k} \Gamma(u,v,s,k)+\omega(k),
\end{equation}
where $M_{1}$ is a constant independent  of $s$, $k$ and $\sigma$ and
where $\omega$ is a positive function such that $\lim_{k\rightarrow
  0}\omega(k)=0$.

\par
We first write that for almost
any $t\in(0,T)$
\begin{align*}
\limsup_{\sigma\rightarrow0} \frac{1}{k}| C_{s,k}^{\sigma}(t)|   &
{} = \bigg| \frac{1}{k}\int_{0}^{t}
\int_{\Omega}\left[  \chi_{\left\{  \left\vert u\right\vert \leq
s\right\}  }K(u)-\chi_{\left\{  \left\vert v\right\vert \leq s\right\}
}K(v)\right] \\
& \qquad  \times\nabla T_{k}\left(  T_{s}(u)-T_{s}(v)\right)
dxd{\tau} \bigg| \\
&  \leq C_{s,k}^{1}+C_{s,k}^{2}+C_{s,k}^{3},
\end{align*}
where
\[
C_{s,k}^{1}=\frac{1}{k}\int_{Q_{T}}\chi_{\left\{
\left\vert u\right\vert \leq s\wedge|v|>s\right\}  }
|K(u)|\, |\nabla
T_{k}(u-s\sign(v))|dxd{\tau} ,
\]%
\[
C_{s,k}^{2}=\frac{1}{k}\int_{Q_{T}}\chi_{\left\{
\left\vert v\right\vert \leq s\wedge|u|>s\right\}  }|K(v)|\, |\nabla
T_{k}\left(  v-s\sign(u)\right)  |dxd{\tau}
\]
and%
\[
C_{s,k}^{3}=\frac{1}{k}\int_{Q_{T}} \chi_{\left\{
\left\vert v\right\vert \leq s\wedge|u|\leq s\right\}  }%
|K(u)-K(v)|\, |\nabla T_{k}\left(  u-v\right)  |dxd{\tau} .
\]
We estimate $C_{s,k}^{1}$ and $C_{s,k}^{2}.$ By (\ref{crescita K})\ we obtain%
\begin{equation}
\label{eqog5}
\begin{split}
    C_{s,k}^{1} & \leq \frac{1}{k}\int_{Q_{T}}%
    \chi_{\left\{ \left\vert u\right\vert \leq s\wedge|v|>s\right\}}
    \chi_{\{ |u-s\sign(v)|<k\}} |K(u)|\, |\nabla u| dxd{\tau} \\
    & \leq \frac{1}{k}\int_{\left\{ s-k < \left\vert u\right\vert \leq
        s\right\} }|K(u)|\, |\nabla u|dxd\tau
  \end{split}
\end{equation}
and similarly
\begin{equation}
C_{s,k}^{2}\leq\ \frac{1}{k}
\int_{\left\{ s-k < \left\vert v\right\vert \leq    s\right\}  }|K(v)|\, |\nabla
v|dxd\tau.\label{eqog6}
\end{equation}
Finally, since the function $K$ is locally Lipschitz continuous, we have for
some positive $L_{s}$ element of $L^{p'}(Q_{T})$
\begin{align*}
C_{s,k}^{3}  &  =\frac{1}{k}\int_{Q_{T}}
\chi_{\left\{  \left\vert v\right\vert \leq s\wedge|u|\leq s\right\}
}|K(u)-K(v)||\nabla T_{k}\left(  T_{s}(u)-T_{s}(v)\right)  |dxd{\tau} \\
&  \leq\frac{1}{k}\int_{Q_{T}}\chi_{\{0<|T_{s}(v)-T_{s}(u)|<
k\}}L_{s}(x,\tau)\,|T_{s}(u)-T_{s}(v)|
\\
& \qquad\qquad {} \times|\nabla T_{k}\left(  T_{s}(u)-T_{s}(v)\right)
|dx d\tau\\
&  \leq \int_{Q_{T}}\chi_{\{0<|T_{s}(v)-T_{s}(u)| < k\}}%
L_{s}(x,\tau) \, |\nabla T_{k}\left(  T_{s}(u)-T_{s}(v)\right)  |dxd\tau \\
& \leq  \int_{Q_{T}}\chi_{\{0<|T_{s}(v)-T_{s}(u)| < k\}}
L_{s}(x,\tau)\big( |\nabla T_{s}(u)|+ |\nabla T_{s}(v)|\big) dxd\tau.
\end{align*}
Since $L_{s}$ belongs to $L^{p'}(Q_{T})$ and due to
 \eqref{9} the function $L_{s}(x,\tau)\big( |\nabla T_{s}(u)|+ |\nabla
 T_{s}(v)|\big)$ belongs to $L^{1}(Q_{T})$. Because
 $\chi_{\{0<|T_{s}(v)-T_{s}(u)| < k\}}$ tends to $0$ almost everywhere
 in $Q_{T}$ as $k$ goes to $0$ and is bounded by $1$,   the Lebesgue dominated
convergence theorem leads to
\begin{equation}
  \lim_{k\rightarrow 0}
C_{s,k}^{3}=0, \quad \text{for any $s>0$.}
\label{C3}%
\end{equation}
\par
In order to estimate $G_{s,k}(t)$, we obtain for almost any $t\in
(0,T)$
\begin{align*}
\limsup_{\sigma\rightarrow0} \frac{1}{k}|G_{s,k}^{\sigma}(t) |  &
{} = \bigg| \frac{1}{k}\int_{0}^{t}
\int_{\Omega}\left[  \chi_{\left\{  \left\vert u\right\vert \leq
s\right\}  }g-\chi_{\left\{  \left\vert v\right\vert \leq s\right\}
}g\right] \\
& \qquad  \times\nabla T_{k}\left(  T_{s}(u)-T_{s}(v)\right)
dxd{\tau} \bigg| \\
&  \leq G_{s,k}^{1}+G_{s,k}^{2},
\end{align*}
where
\[
G_{s,k}^{1}=\frac{1}{k}\int_{Q_{T}}\chi_{\left\{
\left\vert u\right\vert \leq s\wedge|v|>s\right\}  }
|g|\, |\nabla
T_{k}(u-s\sign(v))|dxd{\tau} ,
\]%
and
\[
G_{s,k}^{2}=\frac{1}{k}\int_{Q_{T}}\chi_{\left\{
\left\vert v\right\vert \leq s\wedge|u|>s\right\}  }|g|\, |\nabla
T_{k}\left(  v-s\sign(u)\right)  |dxd{\tau}.
\]
Since we have
\begin{align*}
G_{s,k}^{1}  %
% & \leq \frac{1}{k}\int_{Q_{T}}%
%\chi_{\left\{  \left\vert u\right\vert \leq s\wedge|v|>s\right\}}
%  \chi_{\{ |u-s\sign(v)|<k\}} |K(x,\tau,u)|\, |\nabla u| dxd{\tau} \\
&  \leq \frac{1}{k}\int_{\left\{ s-k < \left\vert u\right\vert \leq
    s\right\}  }|g|\, |\nabla
u|dxd\tau
\end{align*}
similar arguments to the ones used to deal with $\widetilde{G}_{s,k}$
yield that
\begin{equation}
  \label{eqog7}
  {G}_{s,k}^{1} \leq  \frac{M_{1}}{k} \int_{\{s-k <  |u| < s\}}\Big( a(u,\nabla u)
  \nabla u + |g|^{p'}  \Big) dx d\tau,
\end{equation}
where $M$ is a constant depending upon $p$ and $\alpha_{0}$. With $v$
in place of $u$ in $G_{s,k}^{2}$ we also have
\begin{equation}
  \label{eqog8}
  {G}_{s,k}^{2} \leq  \frac{M_{1}}{k} \int_{\{s-k <  |v| < s\}}\Big( a(v,\nabla v)
  \nabla v + |g|^{p'}  \Big) dx d\tau.
\end{equation}
Estimates  \eqref{eqog5}--\eqref{eqog8} imply  \eqref{eqog4a}

\par\medskip

{\slshape Step 3.}
We are now in a position to prove that $u=v$ almost everywhere in
$Q_{T}$.  Passing to the limit-sup as $\sigma$ goes to $0$ and then to
the limit-sup as $k$ goes to zero in \eqref{equazione diff con test}
with the help of \eqref{tempo}, \eqref{A}, \eqref{F}, \eqref{eqog2a}
and \eqref{eqog4a} leads to for any $s>0$ and for almost any
$t\in(0,T)$
\begin{equation}
  \label{eqog9}
  \begin{split}
    \int_{\Omega}|T_{s}(u)(t)-T_{s}(v)(t)| dx \leq {} & M_{1}
    \limsup_{k\rightarrow 0} \frac{1}{k}\Gamma(u,v,s,k) \\
     & {} + M_{1}
    \limsup_{\sigma\rightarrow 0} \frac{1}{\sigma} \Gamma(u,v,s,\sigma) + \omega(s).
  \end{split}
\end{equation}
Recalling that $u$ (resp. $v$) is finite almost everywhere in $Q_{T}$,
$T_{s}(u)(t)$ (resp. $T_{s}(v)(t)$)  converges almost everywhere to $u(t)$
(resp. $v(t)$) as $s$ goes to infinity for almost any $t\in (0,T)$. By
Fatou lemma we can pass to
the limit-inf as $s$ goes to $+\infty$ in \eqref{eqog9} and we obtain
for almost any $t\in (0,T)$
\begin{equation}
  \label{eqog10}
\int_{\Omega} |u(t)-v(t)| dx \leq 2M_{1} \liminf_{s\rightarrow +\infty}
\limsup_{k\rightarrow 0} \frac{1}{k}\Gamma(u,v,s,k).
\end{equation}
Lemma \ref{Lemma1} allows us to conclude that $\int_{\Omega}
|u(t)-v(t)| dx = 0$ for almost any $t\in(0,T)$ so that  $u=v$
almost everywhere in $Q_{T}.$
\end{proof}

In the case of the complete operator we need the following lemma which
concerns Boccardo-Gallou\"et kind estimates in Lorentz spaces.

\begin{lemma}
\label{lemma tecnico} Assume that $Q_{T}=\Omega\times\left(  0,T\right)  $
with $\Omega$\ open subset of $\mathbb{R}^{N}$\ of finite measure and
$p>1$. Let be $u$ a measurable function satisfying

$T_{k}\left(  u\right)  \in L^{\infty}\big(  (0,T);L^{2}(\Omega)\big)  \cap
L^{p}\big(  (0,T);W_{0}^{1,p}(\Omega)\big)  $ for every $k>0$ and such that
for $\alpha>\frac{2(N+1)}{N+2}$
\begin{equation}
\sup_{t\in\left(  0,T\right)  }\int_{\Omega}\left\vert T_{k}\left(
u(t)\right)  \right\vert ^{2}\leq kM\text{ and}\int_{0}^{T} \int_{\Omega
}\left\vert \nabla T_{k}(u)\right\vert ^{\alpha}\leq C_{0}k^{\frac{\alpha}{2}%
}M^{\frac{\alpha}{2}}, \label{ip lemma}%
\end{equation}
where $M$ and $C_{0}$ are positive constant. Then%
\begin{equation}
\left\Vert u\right\Vert _{L^{\frac{\alpha(N+2)}{2N},\infty}(Q_{T})}\leq CM
\label{3.5b}%
\end{equation}
and%
\begin{equation}
\left\Vert \left\vert \nabla u\right\vert \right\Vert _{L^{\frac{\alpha
(N+2)}{2(N+1)},\infty}(Q_{T})}\leq CM, \label{3.5}%
\end{equation}
where $C$ is a constant depending only on $N$ and $C_{0}$.
\end{lemma}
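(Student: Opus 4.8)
The statement is a Boccardo--Gallou\"et type estimate, and the plan is to run the classical argument while noting that it naturally delivers a weak-Lebesgue (that is, $L^{q,\infty}$) bound. Throughout write
\[
q_{0}=\frac{\alpha(N+2)}{2N},\qquad q_{1}=\frac{\alpha(N+2)}{2(N+1)},
\]
so that $2q_{0}=\alpha\frac{N+2}{N}$, $q_{1}=q_{0}\frac{N}{N+1}<\alpha$, and, since $\alpha>\frac{2(N+1)}{N+2}>\frac{2N}{N+2}$, all the exponents occurring below are admissible. \emph{Step 1 (a parabolic Gagliardo--Nirenberg inequality).} First I would record that every $w\in L^{\infty}((0,T);L^{2}(\Omega))\cap L^{\alpha}((0,T);W_{0}^{1,\alpha}(\Omega))$ satisfies
\[
\int_{Q_{T}}|w|^{2q_{0}}\,dxdt\le C\Big(\sup_{t\in(0,T)}\int_{\Omega}|w(t)|^{2}\,dx\Big)^{\alpha/N}\int_{Q_{T}}|\nabla w|^{\alpha}\,dxdt ,
\]
with $C=C(N,\alpha)$. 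For almost every $t$ this follows from the Gagliardo--Nirenberg interpolation inequality $\|w(t)\|_{L^{2q_{0}}(\Omega)}\le C\|\nabla w(t)\|_{L^{\alpha}(\Omega)}^{\theta}\|w(t)\|_{L^{2}(\Omega)}^{1-\theta}$ with $\theta=\frac{N}{N+2}$ (its constant being domain--independent thanks to the zero boundary condition, and $\theta$ chosen precisely so that $2q_{0}\theta=\alpha$); one then raises to the power $2q_{0}$, bounds $\|w(t)\|_{L^{2}}$ by its essential supremum in $t$, and integrates in $t$. Applied to $w=T_{k}(u)$, together with \eqref{ip lemma} and the identity $\frac{\alpha}{N}+\frac{\alpha}{2}=q_{0}$, this yields
\[
\int_{Q_{T}}|T_{k}(u)|^{2q_{0}}\,dxdt\le C\,(kM)^{\alpha/N}\,C_{0}\,k^{\alpha/2}M^{\alpha/2}=C\,C_{0}\,(kM)^{q_{0}}.
\]

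\emph{Step 2 (the bound \eqref{3.5b}).} Since $|T_{k}(u)|=k$ on $\{|u|>k\}$, Step~1 gives $k^{2q_{0}}\,\meas\{|u|>k\}\le C\,C_{0}\,(kM)^{q_{0}}$, hence $\meas\{|u|>k\}\le C(M/k)^{q_{0}}$ for every $k>0$, which is exactly $\|u\|_{L^{q_{0},\infty}(Q_{T})}\le CM$.

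\emph{Step 3 (the Boccardo--Gallou\"et balancing and \eqref{3.5}).} For $\lambda>0$ and any $k>0$ I would split $\{|\nabla u|>\lambda\}$ according to whether $|u|\le k$ or $|u|>k$; since $\nabla u=\nabla T_{k}(u)$ a.e.\ on $\{|u|\le k\}$, the gradient bound in \eqref{ip lemma} and Step~2 give
\[
\meas\{|\nabla u|>\lambda\}\le\frac{1}{\lambda^{\alpha}}\int_{Q_{T}}|\nabla T_{k}(u)|^{\alpha}\,dxdt+\meas\{|u|>k\}\le C_{0}\frac{(kM)^{\alpha/2}}{\lambda^{\alpha}}+C\Big(\frac{M}{k}\Big)^{q_{0}} .
\]
Now one optimizes in $k$: the two terms are of the same order for $k$ of order $M^{1/(N+1)}\lambda^{N/(N+1)}$ (which uses $\frac{\alpha}{2}+q_{0}=\frac{\alpha(N+1)}{N}$ and $q_{0}-\frac{\alpha}{2}=\frac{\alpha}{N}$), and since $q_{1}=q_{0}\frac{N}{N+1}$ this choice turns the right-hand side into $C(M/\lambda)^{q_{1}}$. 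Hence $\meas\{|\nabla u|>\lambda\}\le C(M/\lambda)^{q_{1}}$ for all $\lambda>0$, i.e.\ $\|\,|\nabla u|\,\|_{L^{q_{1},\infty}(Q_{T})}\le CM$; in both \eqref{3.5b} and \eqref{3.5} the constant $C$ depends only on $N$, $\alpha$ and $C_{0}$.

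\emph{Where the work is.} There is no genuine obstacle here: the argument rests entirely on the parabolic Gagliardo--Nirenberg inequality of Step~1 (standard: a spatial Sobolev/interpolation estimate followed by integration in time) and on the classical one-parameter optimization of Step~3, which is a one-line computation. The passage to Lorentz spaces costs nothing, because the scheme intrinsically produces the distributional decay $\meas\{\,\cdot\,>\lambda\}\le C(M/\lambda)^{q}$, which is precisely the definition of the $L^{q,\infty}$-norm; the only mild point of care is to keep every constant independent of $M$, $k$, $\lambda$ and of $T$, $|\Omega|$.
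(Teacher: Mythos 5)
Your proof is correct and follows exactly the standard Boccardo--Gallou\"et scheme that the paper itself invokes without reproducing (it omits the proof, referring to Lemma A.1 of \cite{d-f-g}): the parabolic Gagliardo--Nirenberg embedding applied to $T_k(u)$, the level-set bound giving the weak-$L^{\frac{\alpha(N+2)}{2N}}$ estimate for $u$, and the splitting of $\{|\nabla u|>\lambda\}$ with optimization in $k$ for the gradient. The only cosmetic remark is that your constant also depends on $\alpha$ (through the Gagliardo--Nirenberg constant), which is harmless since in the applications $\alpha$ is fixed in terms of $p$ and $N$.
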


Such a result being standard we omit the proof of Lemma
\ref{lemma tecnico} (see for example the proof of
Lemma A.1 given in \cite{d-f-g} with a very few modifications).

\begin{proof}
[Proof of Theorem \ref{th2}]
The proof is divided into four steps.
As in the previous theorem we
consider two renormalized solutions $u$ and $v$ of the Problem
($\ref{problema compl}$) for the same data $f,g$ and $u_{0}$. In Step
1, we plug the test function
$T_{k}\left(  T_{s}^{\sigma}(u)-T_{s}^{\sigma}(v)\right)  $ in the
difference of the equations (\ref{58}) for $u$ and $v$ with $S=T_{s}^{\sigma
}$ (defined in (\ref{test})) and we obtain equation \eqref{differenza
  completa}. Step 2 is devoted to estimate the terms of
\eqref{differenza completa}. In Step 3 we pass to the limit as
$\sigma\rightarrow 0$ and $s\rightarrow +\infty$, $k$ being
fixed. Finally in Step 4 using Lemma \ref{lemma tecnico} we give
an estimate of $\nabla u-\nabla v$ in some
suitable Lorentz spaces, which allows us to conclude that $u=v$.

{\slshape Step 1.} Let $u$ and $v$ be two renormalized solutions to Problem
($\ref{problema compl}$) for the same data $f,g$ and $u_{0}.$ For every real
number $s>0$ and $\sigma>0$
we take $S=T_{s}^{\sigma}$ in (\ref{58}) for $u$ and $v.$ Subtracting these
two equations and plugging the test function $\frac{1}{k}T_{k}\left(
T_{s}^{\sigma}(u)-T_{s}^{\sigma}(v)\right)$, we obtain upon
integration on $(0,t)$, that
\begin{multline}
  \label{differenza completa}
\int_{0}^{t}\left\langle \frac{\partial}{\partial
t}\left[  T_{s}^{\sigma}(u)-T_{s}^{\sigma}(v)\right]  ,T_{k}\left(
T_{s}^{\sigma}(u)-T_{s}^{\sigma}(v)\right)  \right\rangle d{\tau}
\\
  {} + A_{s,k}^{\sigma
}(t) +\widetilde{A}_{s,k}^{\sigma}(t)  \\
 {} =B_{s,k}^{\sigma}(t)  +  C_{s,k}^{\sigma}(t)+\widetilde{C}_{s,k}^{\sigma}(t)+
 F_{s,k}^{\sigma}(t)+G_{s,k}^{\sigma}(t)+\widetilde{G}_{s,k}^{\sigma}(t)
\end{multline}
for every
$k>0$, $s>0$, $\sigma>0$ and for almost any $t\in(0,T)$, where
\begin{equation*}
  \begin{split}
    B_{s,k}^{\sigma}(t)= &{}- \int_{0}^{t}\int_{\Omega}
    \big[ (T_{s}^{\sigma}) ^{\prime }(u)H(\nabla
    u)-\left( T_{s}^{\sigma}\right) ^{\prime}(v)H(\nabla
    u)\big] \\
    & \quad \times T_{k}\left( T_{s}^{\sigma}(u)-T_{s}^{\sigma}(v)\right)
    dxd\tau
  \end{split}
\end{equation*}
and the remained terms are defined in the proof of Theorem \ref{th1}.
We now pass to the limit in \eqref{differenza completa} as $\sigma$
goes to zero and then as $s$ goes to $+\infty$.
\par\medskip
{\slshape Step 2.}
We recall that for almost any $t\in(0,T)$
\begin{multline*}
\int_{0}^{t}\left\langle \frac{\partial}{\partial t}\left[  T_{s}^{\sigma
}(u)-T_{s}^{\sigma}(v)\right] ,  T_{k}\left(  T_{s}^{\sigma}(u)-T_{s}^{\sigma
}(v)\right)  \right\rangle d{\tau} \\% \label{tempo}\\
=  \int_{\Omega}\Psi_{k}\left(
T_{s}^{\sigma}(u)(t)-T_{s}^{\sigma}(v)(t)\right)  dx.
\end{multline*}
Due to the definition of $T_{s}^{\sigma}$ we obtain that
\begin{equation*}
%  \label{eqog12}
  \begin{split}
    \lim_{\sigma\rightarrow 0}\int_{0}^{t}\Big\langle \frac{\partial}{\partial t}\left[  T_{s}^{\sigma
}(u)-T_{s}^{\sigma}(v)\right] , {}& {}  T_{k}\left(  T_{s}^{\sigma}(u)-T_{s}^{\sigma
}(v)\right)  \Big\rangle d{\tau} \\
& =  \int_{\Omega}\Psi_{k}\left(
T_{s}(u)(t)-T_{s}(v)(t)\right)  dx
\end{split}
\end{equation*}
and since $u$ and $v$ are finite almost everywhere in $Q_{T}$, from Fatou
lemma it follows that
\begin{equation}
  \label{eqog12}
  \begin{split}
    \liminf_{s\rightarrow +\infty}\lim_{\sigma\rightarrow 0}\int_{0}^{t}\Big\langle \frac{\partial}{\partial t}\left[  T_{s}^{\sigma
}(u)-T_{s}^{\sigma}(v)\right] , {}& {}  T_{k}\left(  T_{s}^{\sigma}(u)-T_{s}^{\sigma
}(v)\right)  \Big\rangle d{\tau} \\
& \geq   \int_{\Omega}\Psi_{k}\left(
u(t)-v(t)\right)  dx.
\end{split}
\end{equation}
Since $H(\nabla u)$ and $H(\nabla v)$ belong to $L^{1}(Q_{T})$ and
since $u$ and $v$ are finite almost everywhere in $Q_{T}$, the
Lebesgue theorem yields that
\begin{equation*}
\begin{aligned}
  \lim_{s\rightarrow +\infty}\lim_{\sigma\rightarrow0} B_{s,k}^{\sigma}(t)
 & = -\int_{0}^{t} \int_{\Omega} \left[ H(\nabla u)-H(\nabla
   v)\right] T_{k}(u-v) dxd\tau.
\end{aligned}
\end{equation*}
Using the Lipschitz condition \eqref{Lip H} on $H$ and \eqref{ip 1}
we obtain
\begin{multline*}
     \int_{0}^{t}
\int_{\Omega} \big|\left[H(\nabla u)-H(\nabla
   v)\right] T_{k}(u-v) \big| dxd\tau \leq  \\
 k \left\Vert b\right\Vert _{L^{\lambda
,1}(Q_{t})}\left\Vert 1+\left\vert \nabla u\right\vert +\left\vert \nabla
v\right\vert \right\Vert _{L^{q,\infty}(Q_{t})}^{\sigma}\left\Vert
\left\vert \nabla u-\nabla v\right\vert \right\Vert _{L^{\theta,\infty
}(Q_{t})}
\end{multline*}
with
\begin{gather*}
  \frac{1}{\lambda}+\frac{\sigma}{q}+\frac{1}{\theta}=1,
  \quad
1\leq q\leq\frac{N(p-1)+p}{N+1},\quad
\theta=\frac{N+2}{N+1}\quad \text{and} \quad
\lambda\geq N+2.
\end{gather*}
It follows that for almost any $t\in(0,T)$
\begin{multline}
  \label{eqog14}
   \lim_{s\rightarrow +\infty}\lim_{\sigma\rightarrow0}
  | B_{s,k}^{\sigma}(t) | \\
  \leq  k \left\Vert b\right\Vert _{L^{\lambda
,1}(Q_{t})}\left\Vert 1+\left\vert \nabla u\right\vert +\left\vert \nabla
v\right\vert \right\Vert _{L^{q,\infty}(Q_{t})}^{\sigma}\left\Vert
\left\vert \nabla u-\nabla v\right\vert \right\Vert _{L^{\theta,\infty
}(Q_{t})}.
\end{multline}
Since $f$ belongs to $L^{1}(Q_{T})$ while $u$ and $v$ are finite
almost everywhere in $Q_{T}$ we have
\begin{equation}
  \label{eqog16}
  \begin{split}
  \lim_{s\rightarrow+\infty}  \lim_{\sigma\rightarrow0} F_{s,k}^{\sigma}(t)
  = {} &
  \lim_{s\rightarrow+\infty}\int_{0}^{t}\int_{\Omega}f\left[  \chi_{\left\{  \left\vert
          u\right\vert \leq s\right\}  }-\chi_{\left\{  \left\vert
          v\right\vert  \leq s\right\}  }\right] \\
   & \qquad\qquad\qquad {} \times  T_{k}\left(  T_{s}(u)-T_{s}(v)\right)  dxd\tau=0
\end{split}
\end{equation}
We now deal with $A_{s,k}^{\sigma}$, $C_{s,k}^{\sigma}$ and
$G_{s,k}^{\sigma}$. From the definition of $T_{s}^{\sigma}$ and
\eqref{eqog11} we get
\begin{equation}
  \label{eqog13}
\begin{split}
   \lim_{\sigma\rightarrow0} A_{s,k}^{\sigma} (t)
 = {}& \begin{aligned}[t]\int_{0}^{t} \int_{\Omega} \left[  \chi_{\left\{  \left\vert
u\right\vert \leq s\right\}  }a(\nabla u)-\chi_{\left\{  \left\vert v\right\vert
\leq s\right\}  }a(\nabla v)\right]  \\
 \times \nabla T_{k}\left(  T_{s}(u)-T_{s}%
(v)\right)  dxd\tau
\end{aligned}
\\
 = {} &
 \begin{aligned}[t]
   \int_{0}^{t} \int_{\Omega} \chi_{\{T_{s}(u)-T_{s}(v)|<k\}}
   \big[a(\nabla T_{s}(u))-a(\nabla T_{s}(v))\big]
   \\
    \times \big(\nabla T_{s}(u)-\nabla T_{s}(v)\big) dx d\tau
 \end{aligned}
 \\
 \geq {}&
 \begin{aligned}[t]
   \beta \int_{0}^{t} \int_{\Omega} \chi_{\{T_{s}(u)-T_{s}(v)|<k\}}
   (1+|\nabla T_{s}(u)|+|\nabla T_{s}(v)|)^{p-2} \\
    \times  |\nabla    T_{s}(u)-\nabla T_{s}(v)|^{2} dx d\tau.
 \end{aligned}
\end{split}%
\end{equation}
Since $u$ and $v$ are finite almost everywhere Fatou lemma then
implies
 \begin{equation}
   \label{eqog13b}
   \begin{split}
     \liminf_{s\rightarrow +\infty} \lim_{\sigma\rightarrow0}
     A_{s,k}^{\sigma} \geq {} & {} \beta\int_{0}^{t} \int_{\Omega}
     \chi_{\{|u-v|<k\}}
     (1+|\nabla u|+|\nabla (v)|)^{p-2} \\
     & {} \times |\nabla u-\nabla v|^{2} dx d\tau.
   \end{split}
 \end{equation}
Using assumption \eqref{lip K} we have
\begin{equation}
\begin{split}
  \lim_{\sigma\rightarrow0} |C_{s,k}^{\sigma}(t)|
  \leq &
  \begin{aligned}[t]
    \int_{0}^{t}\int_{\Omega}  \left|  \chi_{\left\{  \left\vert u\right\vert
\leq s\right\}  }K(u)-\chi_{\left\{  \left\vert v\right\vert
\leq s\right\}
}K(v)\right| \\
 \times \big|\nabla T_{k}\left(  T_{s}(u)-T_{s}(v)\right)\big|  dxd\tau
 \end{aligned}
 \\
 \leq &
 \begin{aligned}[t]
   \int_{0}^{t}\int_{\Omega}  \chi_{\{|u|\leq s \wedge|v| \leq s\}} c(x,\tau)
   \big(1 + |u|+|v|\big)^{\tau} \\
   \times |u-v| \, |\nabla T_{k}   ( u-v) |
   dx d\tau
 \end{aligned}
 \\
 &{} +  \int_{0}^{t}\int_{\Omega} \chi_{\{s-k<|v|\leq s\}}
|K(v)|\, |\nabla v|dx d\tau
 \\
  &{} +  \int_{0}^{t}\int_{\Omega} \chi_{\{s-k<|u|\leq s\}}
 |K(u)|\, |\nabla u|dx d\tau.
\end{split}
\label{eqog15}
\end{equation}
{}From H\"older inequality and condition \eqref{ip 1} we obtain
\begin{multline}
  \int_{0}^{t}\int_{\Omega}  c(x,\tau)
  \big(1 + |u|+|v|\big)^{\tau}
  |\nabla   u-\nabla v| dxd\tau
  \\
  \leq  \left\Vert c\right\Vert _{L^{r,1}(Q_{t})}\left\Vert
1+\left\vert u\right\vert +\left\vert v\right\vert \right\Vert _{L^{\overline
{q},\infty}(Q_{t})}^{\tau}\left\Vert \left\vert \nabla u-\nabla
v\right\vert \right\Vert _{L^{\theta,\infty}(Q_{t})}
\end{multline}
with
\begin{gather*}
\frac{1}{r}+\frac{\tau}{\overline{q}}+\frac{1}{\theta}=1, \quad
1\leq\overline{q}\leq\frac{N(p-1)+p}{N},\quad
\theta=\frac{N+2}{N+1} \quad \text{and}\quad
r>\frac{N+p}{p-1}.
\end{gather*}
{}From the regularities of $c$, $u$, $v$, $\nabla u$ and $\nabla v$ it
follows that $ c(x,\tau)  \big(1 + |u|+|v|\big)^{\tau}   |\nabla
u-\nabla v|$ belongs to $L^{1}(Q_{t})$ for any $t\in(0,T)$.
Recalling the definition \eqref{eqog1aa} of $\Theta$ in Lemma \ref{Lemma1}
leads to
\begin{equation}
  \label{eqog15a}
  \begin{split}
   \lim_{\sigma\rightarrow0} |C_{s,k}^{\sigma}(t)|
  \leq  & k\left\Vert c\right\Vert _{L^{r,1}(Q_{t})}\left\Vert
1+\left\vert u\right\vert +\left\vert v\right\vert \right\Vert _{L^{\overline
{q},\infty}(Q_{t})}^{\tau} \\
& \quad {} \times \left\Vert \left\vert \nabla u-\nabla
v\right\vert \right\Vert _{L^{\theta,\infty}(Q_{t})}
 + \Theta(u,v,s,k)
\end{split}
\end{equation}
for any $k>0$.
\par
We now study $G_{s,k}^{\sigma}(t)$. We first have
\begin{equation*}
  \lim_{\sigma\rightarrow0}
G_{s,k}^{\sigma}(t)   =\int_{0}^{t}\int_{\Omega}g\left[  \chi_{\left\{  \left\vert u\right\vert
<s\right\}  }-\chi_{\left\{  \left\vert v\right\vert <s\right\}  }\right]
\nabla T_{k}\left(  T_{s}(u)-T_{s}(v)\right)  dxdt.
\end{equation*}
It follows that
\begin{equation*}
\begin{split}
  \lim_{\sigma\rightarrow0}
|G_{s,k}^{\sigma}(t) |  \leq & \int_{0}^{t}\int_{\Omega}\chi_{\{s-k<|u|<s\}}|g|
|\nabla u| dx d\tau
\\
& {} + \int_{0}^{t}\int_{\Omega}\chi_{\{s-k<|v|<s\}}|g|
|\nabla v| dx d\tau.
\end{split}
\end{equation*}
With Young inequality and integrating on $Q_{T}$ in place of
$\Omega\times (0,t)$ we obtain
\begin{equation*}
\begin{split}
  \lim_{\sigma\rightarrow0}
|G_{s,k}^{\sigma}(t) |  \leq & \frac{1}{p'}\int_{Q_T}
\big(\chi_{\{s-k<|u|<s\}}+\chi_{\{s-k<|v|<s\}}\big) |g|^{p'} dx d\tau
\\
& {} + \frac{1}{p}\int_{\{s-k<|u|<s\}}|\nabla u|^{p} dx d\tau +
\frac{1}{p}\int_{\{s-k<|v|<s\}}|\nabla v|^{p} dx d\tau .
\end{split}
\end{equation*}
Since $u$ and $v$ are finite almost everywhere in $Q_{T}$ the function
$\big(\chi_{\{s-k<|u|<s\}}+\chi_{\{s-k<|v|<s\}}\big) |g|^{p'}$
converges to zero as $s$ goes to $+\infty$ in $L^{1}(Q_{T})$. Since the
operator $a$ is elliptic (see assumption \eqref{ellit}) and recalling
the definition of $\Theta$ in Lemma \ref{Lemma1} we then obtain
\begin{equation}
  \label{eqog17}
  \lim_{\sigma\rightarrow0}
|G_{s,k}^{\sigma}(t) |\leq \frac{1}{\alpha_{0}} \Theta(u,v,s,k) + \omega(s)
\end{equation}
where $\omega(s)$ is a generic function which converges to $0$ as $s$
goes to infinity.
\par
We recall (see
 \eqref{eqog2a} in the proof of Theorem \ref{th1}) that
for almost any $t\in(0,T)$
\begin{equation}\label{eqog12a}
\big|\widetilde{A}_{s,k}^{\sigma}(t)\big|+
  \big|\widetilde{C}_{s,k}^{\sigma}(t)\big| +
  \big|\widetilde{G}_{s,k}^{\sigma}(t)\big|
  \leq \frac{M_{1}k}{\sigma} \Gamma(u,v,s,\sigma).
\end{equation}
{\relax}From  estimates \eqref{eqog15a}, \eqref{eqog17} and \eqref{eqog12a}
it follows that
\begin{equation*}
  \begin{split}
   \limsup_{\sigma\rightarrow0} \big(&|C_{s,k}^{\sigma}(t)|
   +
   |\widetilde{A}_{s,k}^{\sigma}(t)|+
|G_{s,k}^{\sigma}(t) |+
  |\widetilde{C}_{s,k}^{\sigma}(t)| +
  |\widetilde{G}_{s,k}^{\sigma}(t)|\big) \\
  \leq {} &{} k\left\Vert c\right\Vert _{L^{r,1}(Q_{t})}\left\Vert
1+\left\vert u\right\vert +\left\vert v\right\vert \right\Vert _{L^{\overline
{q},\infty}(Q_{t})}^{\tau}
 \left\Vert \left\vert \nabla u-\nabla
v\right\vert \right\Vert _{L^{\theta,\infty}(Q_{t})}
\\ &{}  + \Theta(u,v,s,k) + \frac{1}{\alpha_{0}} \Theta(u,v,s,k) +
\omega(s) \\
& {} + M_{1} k \limsup_{\sigma\rightarrow 0} \frac{1}{\sigma} \Gamma(u,v,s,\sigma).
 \end{split}
\end{equation*}
By the above
inequality and Lemma \ref{Lemma1} we can conclude
that for almost any $t\in(0,T)$
\begin{equation}
  \label{eqog12b}
   \begin{split}
   \liminf_{s\rightarrow+\infty}\limsup_{\sigma\rightarrow0} \big(&|C_{s,k}^{\sigma}(t)|
   +
   |\widetilde{A}_{s,k}^{\sigma}(t)|+
|G_{s,k}^{\sigma}(t) |+
  |\widetilde{C}_{s,k}^{\sigma}(t)| +
  |\widetilde{G}_{s,k}^{\sigma}(t)|\big) \\
  \leq {} &{} k\left\Vert c\right\Vert _{L^{r,1}(Q_{t})}\left\Vert
1+\left\vert u\right\vert +\left\vert v\right\vert \right\Vert _{L^{\overline
{q},\infty}(Q_{t})}^{\tau}
 \left\Vert \left\vert \nabla u-\nabla
v\right\vert \right\Vert_{L^{\theta,\infty}(Q_{t})} .
 \end{split}
\end{equation}

\par
{\slshape Step 3.}
We are now able to pass to the limit in \eqref{differenza
  completa}. Indeed gathering
 \eqref{eqog12}, \eqref{eqog14}, \eqref{eqog16}, \eqref{eqog13b}
and \eqref{eqog12b}, we get
\begin{align*}
  \int_{\Omega}\Psi_{k}&\left(  u(t)-v(t)\right)  dx \\
 & +
\frac{\beta}{2}\int_{0}^{t} \int_{\Omega}
     \chi_{\{|u-v|<k\}}
     (1+|\nabla u|+|\nabla (v)|)^{p-2}
      |\nabla u-\nabla v|^{2} dx d\tau
\\
\leq {} &   k\left\Vert c\right\Vert _{L^{r,1}(Q_{t})}\left\Vert
1+\left\vert u\right\vert +\left\vert v\right\vert \right\Vert _{L^{\overline
{q},\infty}(Q_{t})}^{\tau} \left\Vert \left\vert \nabla u-\nabla
v\right\vert \right\Vert _{L^{\theta,\infty}(Q_{t})}
\\
&  +  k \left\Vert b\right\Vert _{L^{\lambda
,1}(Q_{t})}\left\Vert 1+\left\vert \nabla u\right\vert +\left\vert \nabla
v\right\vert \right\Vert _{L^{q,\infty}(Q_{t})}^{\sigma}\left\Vert
\left\vert \nabla u-\nabla v\right\vert \right\Vert _{L^{\theta,\infty
}(Q_{t})}
\end{align*}
 for almost any $t\in(0,T)$.
It is worth noting that the above inequality implies
\[
 \chi_{\{|u-v|<k\}} (1+|\nabla
u|+|\nabla (v)|)^{p-2}   |\nabla u-\nabla v|^{2}\in L^{1}(Q_{T}).
\]
Since  $(1+|\xi|+|\xi'|)^{p-2}   |\xi-\xi'|^{2}\geq  |\xi-\xi'|^{2}$
for any $\xi$, $\xi'$ in $\mathbb{R}^{N}$,  we obtain that
$T_{k}(u-v)$ belongs to
$L^{2}((0,T); H^{1}_{0}(\Omega))$.

Due to the definition of $\Psi_{k}$, taking the supremum for $t\in\left(
0,t_{1}\right)  $, where $t_{1}\in(0,T)$ will be chosen later, leads to
\begin{equation}
\label{kM}
  \frac{1}{2}\underset{t\in(0,t_{1})}{\sup}\int_{\Omega}\left\vert
T_{k}\left(  u-v\right)  \right\vert ^{2}dx +
\frac{\beta}{2}\int_{0}^{t_{1}} \int_{\Omega}
          |\nabla T_{k}(u- v)|^{2} dx d\tau \leq kM
\end{equation}
where%
\begin{equation}
\label{M}%
\begin{split}
M= & \left\Vert b\right\Vert _{L^{\lambda
,1}(Q_{t_{1}})}\left\Vert 1+\left\vert \nabla u\right\vert +\left\vert \nabla
v\right\vert \right\Vert _{L^{q,\infty}(Q_{t_{1}})}^{\sigma}\left\Vert
\left\vert \nabla u-\nabla v\right\vert \right\Vert _{L^{\theta,\infty
}(Q_{t_{1}})} \\
&  +   \left\Vert c\right\Vert _{L^{r,1}(Q_{t_{1}})}\left\Vert
1+\left\vert u\right\vert +\left\vert v\right\vert \right\Vert _{L^{\overline
{q},\infty}(Q_{t_{1}})}^{\tau}\left\Vert \left\vert \nabla u-\nabla
v\right\vert \right\Vert _{L^{\theta,\infty}(Q_{t_{1}})}.
\end{split}
\end{equation}
By (\ref{kM}) and Lemma \ref{lemma tecnico} we get%
\begin{equation}
\left\Vert \nabla u-\nabla v\right\Vert _{L^{\theta,\infty}(Q_{t_{1}})}\leq
CM\text{ }\label{teta}%
\end{equation}
for some constant $C>0$ independent on $u$ and $v$ and $\theta=\frac{N+2}%
{N+1}$.
\par\smallskip
{\slshape Step 4.}
Using (\ref{M}) and (\ref{teta}) we obtain%
\begin{align}
\left\Vert \left\vert \nabla u-\nabla v\right\vert \right\Vert _{L^{\theta
,\infty}(Q_{t_{1}})} \leq {} &  C\left[  \left\Vert b\right\Vert _{L^{\lambda
,1}(Q_{t_{1}})}\left\Vert 1+\left\vert \nabla u\right\vert +\left\vert \nabla
v\right\vert \right\Vert _{L^{q,\infty}(Q_{t_{1}})}^{\sigma}\right.  \label{after lemma}\\
& \quad \left. + \left\Vert c\right\Vert _{L^{r,1}(Q_{t_{1}})}\left\Vert
1+\left\vert u\right\vert +\left\vert v\right\vert \right\Vert _{L^{\overline
{q},\infty}(Q_{t_{1}})}^{\tau} \right] \nonumber \\
& {} \times \left\Vert \left\vert \nabla u-\nabla
v\right\vert \right\Vert _{L^{\theta,\infty}(Q_{t_{1}})}. \nonumber
\end{align}
Since $c$ belongs to $L^{r,1}(Q_{T})$ and since $b$ belongs to
$L^{\lambda,1}(Q_{T})$, choosing $t_{1}$ small enough such that
\begin{multline}
1-C\Big(  \left\Vert b\right\Vert _{L^{\lambda,1}(Q_{t_{1}})}\left\Vert
1+\left\vert \nabla u\right\vert +\left\vert \nabla v\right\vert \right\Vert
_{L^{q,\infty}(Q_{t_{1}})}^{\sigma}
\\
+\left\Vert c\right\Vert _{L^{r,1}%
(Q_{t_{1}})}\left\Vert 1+\left\vert u\right\vert +\left\vert v\right\vert
\right\Vert _{L^{\overline{q},\infty}(Q_{t_{1}})}^{\tau}\Big)  >0,
\label{prima t1}%
\end{multline}
then  (\ref{after lemma}) gives
\begin{equation}
\left\Vert \left\vert \nabla u-\nabla v\right\vert \right\Vert _{L^{\theta
,\infty}(Q_{t_{1}})}\leq0 \label{primo}%
\end{equation}
with $\theta=\frac{N+2}{N+1}$.
\par

Now we use the same technique as in \cite{porzio} (see also
\cite{d-f-g}). We consider a partition of
the entire interval $\left[  0,T\right]  $ into a finite number of intervals
$\left[  0,t_{1}\right]  $, $\left[  t_{1},t_{2}\right]  ,...$, $\left[
t_{n-1,}T\right]  $ such that for each interval $\left[  t_{i-1},t_{i}\right]
$ a similar condition to (\ref{prima t1}) holds. In this way in each cylinder
$Q_{t_{i}}=\Omega\times\left[  t_{i-1},t_{i}\right]  $ we obtain estimates of
type (\ref{primo}). Then we can deduce that%
\[
\left\Vert \left\vert \nabla u-\nabla v\right\vert \right\Vert _{L^{\theta
,\infty}(Q_{T})}\leq0\text{ \ \ for some }\theta\geq1,\text{ }%
\]
that implies that $u=v$ a.e. in $Q_{T}.$
\end{proof}

\begin{proof}[Proof of Theorem \ref{th3}]

  The strategy of the proof is the same as in Theorem \ref{th2}
  and relies on
  passing to the limit in \eqref{differenza completa}.
  The main
  differences are in dealing the terms $A_{s,k}^{\sigma}(t)$,
  $B_{s,k}^{\sigma}$ and $C_{s,k}^{\sigma}(t)$ and
  the estimate on $\nabla T_{k}(u-v)$. We recall
  \eqref{differenza completa}:
\begin{multline*}
\int_{0}^{t}\left\langle \frac{\partial}{\partial
t}\left[  T_{s}^{\sigma}(u)-T_{s}^{\sigma}(v)\right]  ,T_{k}\left(
T_{s}^{\sigma}(u)-T_{s}^{\sigma}(v)\right)  \right\rangle d{\tau}
\\
  {} + A_{s,k}^{\sigma
}(t) +\widetilde{A}_{s,k}^{\sigma}(t)  \\
 {} =B_{s,k}^{\sigma} (t) +  C_{s,k}^{\sigma}(t)+\widetilde{C}_{s,k}^{\sigma}(t)+
 F_{s,k}^{\sigma}(t)+G_{s,k}^{\sigma}(t)+\widetilde{G}_{s,k}^{\sigma}(t)
\end{multline*}
for any $s>0$, any $k>0$ and
  any $\sigma>0$ and for almost any $t\in(0,T)$.
 Reasoning as in the previous theorem by assumption \eqref{eqog11a}, we
  obtain that
   \begin{equation}
   \label{eqog13bb}
   \begin{split}
     \liminf_{s\rightarrow +\infty} \lim_{\sigma\rightarrow0}
     A_{s,k}^{\sigma}(t) \geq {} & {} \beta\int_{0}^{t} \int_{\Omega}
     \chi_{\{|u-v|<k\}}
    \frac{\left\vert \nabla u- \nabla v)\right\vert ^{2}}{\left(  \left\vert \nabla
u\right\vert +\left\vert \nabla v\right\vert \right)  ^{2-p}}
 dx d\tau.
   \end{split}
 \end{equation}
 As far as $B_{s,k}^{\sigma}(t)$ is concerned, a few computations,
 estimates \eqref{stimagradiente} and \eqref{stimau}, condition
 \eqref{ip 2} and  H\"older inequality lead to
 \begin{multline*}
     \int_{0}^{t}
\int_{\Omega} \big|\left[H(\nabla u)-H(\nabla
   v)\right] T_{k}(u-v) \big| dxd\tau \leq  \\
 k  \left\Vert b\right\Vert _{L^{\lambda
,1}(Q_{t})}\left\Vert 1+\left\vert \nabla u\right\vert +\left\vert \nabla
v\right\vert \right\Vert _{L^{q,\infty}(Q_{t})}^{\sigma}\left\Vert
\left\vert \nabla u-\nabla v\right\vert \right\Vert _{L^{\theta,\infty
}(Q_{t})}
 \end{multline*}
 with
 \begin{gather*}
   \frac{1}{\lambda}+\frac{\sigma}{q}+\frac{1}{\theta}=1,\quad
1\leq q\leq\frac{N(p-1)+p}{N+1}, \\
\theta=\frac{\alpha(N+2)}{2(N+1)},\quad
\lambda\geq N+2 \quad \text{and} \quad
\alpha<\frac{2p(N+1)-2N}{N+2}.%
 \end{gather*}
 Similarly we obtain
 \begin{equation}
  \label{eqog20}
  \begin{split}
\liminf_{s\rightarrow +\infty}
   \lim_{\sigma\rightarrow0} |C_{s,k}^{\sigma}(t)|
  \leq {} & k
   \left\Vert c\right\Vert _{L^{r,1}(Q_{t})}\left\Vert
1+\left\vert u\right\vert +\left\vert v\right\vert \right\Vert _{L^{\overline
{q},\infty}(Q_{t})}^{\tau}\\
&{}  \times \left\Vert \left\vert \nabla u-\nabla
v\right\vert \right\Vert _{L^{\theta,\infty}(Q_{t})}
\end{split}
\end{equation}
 with
 \begin{gather*}
   \frac{1}{r}+\frac{\tau}{\overline{q}}+\frac{1}{\theta}=1,\quad
1\leq\overline{q}\leq\frac{N(p-1)+p}{N},\\
\theta=\frac{\alpha(N+2)}{2(N+1)},\quad
r>\frac{N+p}{p-1},\quad \text{and} \quad
\alpha<\frac{2p(N+1)-2N}{N+2}.%
 \end{gather*}

Then the analogous of \eqref{kM} is
 \begin{equation}\label{dopolimite p<2}
  \frac{1}{2}\underset{t\in(0,t_{1})}{\sup}\int_{\Omega}\left\vert
T_{k}\left(  u-v\right)  \right\vert ^{2}dx+\beta\int_{0}^{t_{1}}\int_{\Omega
}\frac{\left\vert \nabla T_{k}(u-v)\right\vert ^{2}}{\left(  \left\vert \nabla
u\right\vert +\left\vert \nabla v\right\vert \right)  ^{2-p}}%
dxdt  \leq k M
\end{equation}
where $t_{1}$ will be chosen later and $M$ is defined in the proof of Theorem \ref{th2} (see \eqref{M}).
Then we obtain that
\begin{equation}
\beta \int_{0}^{t_{1}}\int_{\Omega}\frac{\left\vert \nabla T_{k}(u-v)\right\vert
^{2}}{\left(  \left\vert \nabla u\right\vert +\left\vert \nabla v\right\vert
\right)  ^{2-p}}dxdt\leq Mk,\label{ip lemma parziale}%
\end{equation}%
\begin{equation}
\frac{1}{2}\underset{t\in(0,t_{1})}{\sup}\int_{\Omega}\left\vert T_{k}\left(
u-v\right)  \right\vert ^{2}\leq Mk,\label{ip lemma 1}.%
\end{equation}
If $1\leq\alpha<p$, by H\"{o}lder
inequality and (\ref{ip lemma parziale}) we have
\begin{equation}
  \label{prima lemma}
  \begin{split}
\int_{0}^{t_{1}}\int_{\Omega}  \vert \nabla & T_{k}(u-v)\vert ^{\alpha
}dxdt
\\
& {}=\int_{0}^{t_{1}}\int_{\Omega}\left\vert \nabla T_{k}(u-v)\right\vert
^{\alpha}\frac{\left(  \left\vert \nabla u\right\vert +\left\vert \nabla
v\right\vert \right)  ^{(2-p)\frac{\alpha}{2}}}{\left(  \left\vert \nabla
u\right\vert +\left\vert \nabla v\right\vert \right)  ^{(2-p)\frac{\alpha}{2}%
}}dxdt
\\
& {}\leq \left(  \int_{0}^{t_{1}}\int_{\Omega}\frac{\left\vert \nabla T_{k}%
(u-v)\right\vert ^{2}}{\left(  \left\vert \nabla u\right\vert +\left\vert
\nabla v\right\vert \right)  ^{2-p}}dxdt\right)^{\frac{\alpha}{2}}
\\
& \qquad{} \times \left(
\int_{0}^{t_{1}}\int_{\Omega}\left(  \left\vert \nabla u\right\vert
+\left\vert \nabla v\right\vert \right)  ^{\frac{(2-p)\alpha}{2-\alpha}%
}dxdt\right)  ^{\frac{2-\alpha}{2}}\\
& {} \leq\left(  Mk\right)  ^{\frac{\alpha}{2}}\left(  \int_{0}^{t_{1}}%
\int_{\Omega}\left(  \left\vert \nabla u\right\vert +\left\vert \nabla
v\right\vert \right)  ^{\frac{(2-p)\alpha}{2-\alpha}}dxdt\right)
^{\frac{2-\alpha}{2}}.
\end{split}
\end{equation}
By \eqref{stimagradiente} the last integral in
(\ref{prima lemma}) is finite if
\begin{equation}
\alpha<\frac{2p(N+1)-2N}{N+2}.\label{alfa}%
\end{equation}
We observe that condition (\ref{alfa}) and the conditon on $\alpha$ in Lemma
\ref{lemma tecnico} are compatible only if $p>2-\frac{1}{N+1}.$ Then by
(\ref{prima lemma}),(\ref{alfa}) and by H\"{o}lder inequality we have
\begin{equation}
\int_{0}^{t_{1}}\int_{\Omega}\left\vert \nabla T_{k}(u-v)\right\vert ^{\alpha
}dxdt\leq C\left(  Mk\right)  ^{\frac{\alpha}{2}},\label{2 ipotesi lemma}%
\end{equation}
where $C$ is constant independent on $t_{1}$.
\par
By (\ref{ip lemma 1}), (\ref{2 ipotesi lemma}) and the definition of $M$ Lemma \ref{lemma tecnico}
 gives for  $\theta=\frac{\alpha(N+2)}{2(N+1)}$
\begin{align}
\left\Vert \left\vert \nabla u-\nabla v\right\vert \right\Vert _{L^{\theta
,\infty}(Q_{t_{1}})}\leq {} & C\Big[  \left\Vert b\right\Vert _{L^{\lambda
,1}(Q_{t_{1}})}\left\Vert 1+\left\vert \nabla u\right\vert +\left\vert \nabla
v\right\vert \right\Vert _{L^{q,\infty}(Q_{t_{1}})}^{\sigma}
\label{after lemma 2}\\
& \quad  +  \left\Vert c\right\Vert _{L^{r,1}(Q_{t_{1}})}\left\Vert
1+\left\vert u\right\vert +\left\vert v\right\vert \right\Vert _{L^{\overline
{q},\infty}(Q_{t_{1}})}^{\tau} \big] \nonumber \\
& \quad \times
\left\Vert \left\vert \nabla u-\nabla
v\right\vert \right\Vert _{L^{\theta,\infty}(Q_{t_{1}})}  .\nonumber
\end{align}

Under hypotheses (\ref{ip 2}) we can choose $t_{1}$ small enough
such that\ (\ref{prima t1}) holds. Then by (\ref{after lemma 2}) and
(\ref{prima t1}) it follows that for $\theta=\frac{\alpha
(N+2)}{2(N+1)}$
\begin{equation*}
  \left\Vert \left\vert \nabla u-\nabla v\right\vert \right\Vert _{L^{\theta
,\infty}(Q_{t_{1}})}\leq0.
\end{equation*}
Arguing as in Theorem \ref{th2} we conclude that
$u=v$ almost everywhere in $Q_{T}$.
\end{proof}

%%%%%%%%%%%%%%%%%%%%%%%%%%%%%%%%%%%%%%%%%%%%%%%%%%%%%%%%%%%%%

\medskip
\section*{Acknowledgement}
This work was done during the visits made by the first two authors to
Laboratoire de Math\'ematiques ``Rapha\"el Salem'' de l'Universit\'e
de Rouen and by the third author to Dipartimento di Matematica  della Seconda Universit\`a degli Studi di
Napoli. Hospitality and support of all these
institutions are gratefully acknowledged.

\par\medskip
\bibliographystyle{plain}
\bibliography{unicite}

\end{document}